\newcommand{\zz}{{\Bbb Z}}
\newcommand{\nn}{\Bbb N}
\newcommand{\cc}{\Bbb C}
\newcommand{\pp}{{\Bbb P}}
\newcommand{\iis}{{\mathbf{i}}}
\newcommand{\lls}{{\mathbf{l}}}
\newcommand{\ddim}{\operatorname{dim}}
\newcommand{\ddeg}{\operatorname{deg}}
\newcommand{\kker}{\operatorname{Ker}}
\newcommand{\op}[1]{\operatorname{#1}}
\newcommand{\ffi}{\varphi}
\newcommand{\eps}{\varepsilon}
\newcommand{\la}{\langle}
\newcommand{\ra}{\rangle}
\newcommand{\row}{\rightarrow}
\newcommand{\low}{\leftarrow}
\newcommand{\lrow}{\longrightarrow}
\renewcommand{\leq}{\leqslant}
\renewcommand{\geq}{\geqslant}
\newcommand{\nichego}[1]{}
\newcommand{\ov}[1]{\overline{#1}}
\newcommand{\wt}[1]{\widetilde{#1}}
\newcommand{\smk}{{\mathbf{Sm_k}}}
\newcommand{\laz}{{\Bbb L}}
\newcommand{\co}{{\cal O}}
\newcommand{\cl}{{\cal L}}
\newcommand{\cm}{{\cal M}}
\newcommand{\cv}{{\cal V}}
\newcommand{\cn}{{\cal N}}
\newcommand{\che}[2]{c^{\Omega}(#1;#2)}
\newcommand{\cheCH}[2]{c(#1;#2)}
\newcommand{\Qed}{\hfill$\square$\smallskip}
\newenvironment{proof}{\noindent{\it Proof}:}{\vskip 5mm}
\newtheorem{prop}{Proposition}[section]{\bf}{\it}
\newtheorem{thm}[prop]{Theorem}{\bf}{\it}
\newtheorem{lem}[prop]{Lemma}{\bf}{\it}
{\bf}{\it}
\newtheorem{defi}[prop]{Definition}{\bf}{\it}
{\bf}{\it}
{\bf}{\it}
{\bf}{\it}
\newtheorem{rem}[prop]{Remark}{\bf}{}
\newtheorem{claim}[prop]{Claim}{\bf}{\it}
\newtheorem{cor}[prop]{Corollary}{\bf}{\it}
{\bf}{\it}
\begin{document}

\title{Symmetric operations for all primes and Steenrod operations in
Algebraic Cobordism}
\author{Alexander Vishik\footnote{School of Mathematical Sciences, University
of Nottingham}}
\date{}

\maketitle

\begin{abstract}
In this article we construct Symmetric operations for all primes (previously known only for $p=2$).
These unstable operations are more subtle than the Landweber-Novikov operations, and encode all
$p$-primary divisibilities of characteristic numbers. Thus, taken together (for all primes) they
plug the gap left by the Hurewitz map $\laz\hookrightarrow\zz[b_1,b_2,\ldots]$, providing an
important structure on Algebraic Cobordism. Applications include: questions of rationality of Chow
group elements - see \cite{GPQCG}, and the structure of the Algebraic Cobordism - see \cite{ACMLR}.
We also construct Steenrod operations of T.tom Dieck-style in Algebraic Cobordism. These unstable
multiplicative operations are more canonical and subtle than Quillen-style operations, and complement
the latter.
\end{abstract}

\tableofcontents

\section{Introduction}
\label{Intro}

The standard tool to distinguish and classify algebraic varieties is the
use of various cohomology theories. The prominent place among them is occupied
by Chow groups $\op{CH}^*$ and $K_0$. Many results and conjectures in Algebraic
Geometry are formulated in terms of these theories. But as was shown by M.Levine
and F.Morel, these two theories are just small "faces" of much larger Algebraic Cobordism
theory $\Omega^*$, which is an algebro-geometric analogue of the complex-oriented
cobordism $MU^*$ in topology. This theory is rich because of the abundance of
cohomological operations on it. The stable ones among them are provided by the
Landweber-Novikov operations (see \cite[Example 4.1.25]{LM}). These operations
permit to prove (as well as to formulate!) many interesting results on Algebraic Cobordism,
on Chow groups, and on $K_0$. But for some sorts of questions (related to torsion effects)
these operations are not subtle enough. The remedy is the use of unstable operations.
As the first example of such operations on $\Omega^*$, the {\it Symmetric operations} for $p=2$
were constructed in \cite{so1} and \cite{so2}. In \cite{GPQCG} these operations were
applied to the question of rationality of the Chow group elements, and they provide the
only known way to deal with the 2-torsion there. More generally, it was observed that
these operations control all 2-primary divisibilities of characteristic numbers, and
thus plug 2-adically the gap between $\laz=\Omega^*(\op{Spec}(k))=\pi_*(MU)$
and $\zz[b_1,b_2,\ldots]=H_*(MU)$ left by the Hurewitz map. In Topology an analogous
observation is (implicitly) contained in the beatiful work \cite{Qu71} of D.Quillen,
where some traces of the topological counterparts of the mentioned operations are used.
As soon as one realizes that Symmetric operations (for $p=2$) of \cite{so2} (constructed with a
completely different purpose in mind) are analogous to operations used by D.Quillen to
describe $MU^*(pt)$, the natural desire appears to control the mentioned "gap" $p$-adically
for odd primes as well, taking into account that in Topology D.Quillen deals with such
primes in the same way as with $p=2$.
The needed Total Symmetric operation for the given $p$ must be the "negative part" of the
(Quillen's type) Total Steenrod operation mod $p$ divided by "formal $p$". The problem
though is to divide canonically.
The obstacle here is that in our context we are deprived
of the standard topological tools, and have to work not with spectra, but with cohomology theories
themselves. The case $p=2$ was done by an explicit geometric construction (using $Hilb_2$), and
it is not clear at all how to generalize it. Besides, there existed no general methods to
construct unstable operations in algebraic geometry, and aside from classical {\it Adams operations}
in $K_0$ and mentioned {\it Symmetric operations} for $p=2$ no examples were known.
After several years of attempts to construct {\it Symmetric operations} for arbitrary $p$,
the author finally found an approach which permitted to describe all unstable additive operations
in $\Omega^*$ and all theories obtained from it by change of coefficients - see \cite{SU}.
In this approach, to construct an operation, one only needs to define it on the
powers of the $\pp^{\infty}$, and check that some compatibility rules are satisfied -
see Theorem \ref{MAIN}. One of the consequences is that we can "divide" operations canonically, since
$(\pp^{\infty})^{\times r}$ is {\it cellular}, and $\laz$ is an integral domain.
This way, we obtain {\it Symmetric operations} for all primes - see Theorem \ref{SOp}.

We also construct Steenrod operations of T.tom Dieck-style in Algebraic Cobordism. These
operations are "more canonical" than Quillen-style Steenrod operations (in contrast to the latter,
they depend on $p$ only). And while the Quillen-style Total Steenrod operation is
just a specialization of the Total Landweber-Novikov operation, the one of the T.tom Dieck-style is
an unstable multiplicative operation, so a much more subtle object. Our construction uses
some derivatives of the Theorem \ref{MAIN} and a nice Theorem \ref{G} describing the invariants of
the continuous group action on a power series ring.

This text is organized as follows: In Section \ref{AC} we provide basic facts about Algebraic Cobordism
and other {\it Generalized oriented cohomology theories}. In Section \ref{Op} we discuss
cohomological operations between theories and introduce the notion of a {\it theory of rational type}.
This class of theories contains $\Omega^*$, $\op{CH}^*$ and $K_0$ and permits a complete description
of the set of additive cohomological operations (obtained in \cite{SU}). In Section \ref{2St}
we compare Steenrod operations of Quillen and T.tom Dieck-styles in Cobordisms. In Section \ref{cont}
the continuous group action on the power series ring is studied. In Section \ref{TtD} the results
of Sections \ref{Op} and \ref{TtD} are applied to produce the Steenrod operations of T.tom Dieck-style. And, finally, in Section \ref{Sp} we construct {\it Symmetric operations} for all primes $p$, and deduce some
properties of them.

{\bf Acknowledgements:}
I want to thank P.Brosnan, S.Gille, O.Haution, A.Lazarev, M.Levine, F.Morel, I.Panin, M.Rost,
A.Smirnov, V.Voevodsky and other people for very useful conversations.
The support of EPSRC Responsive Mode grant EP/G032556/1 is gratefully acknowledged.
I'm grateful to the Referee for various suggestions and remarks which improved the exposition.

\section{Algebraic Cobordism of Levine-Morel}
\label{AC}

\subsection{Generalized oriented cohomology theories}

In this article, $k$ will denote a field of
characteristic zero, and $\smk$ the category of smooth quasi-projective
variaties over $k$. The notion of generalized oriented cohomology
theory in Algebraic Geometry is borrowed from Topology (D.Quillen -
\cite{Qu71}) with some variations.

Such a theory assigns to each smooth quasi-projective variety $X$
a (commutative and, possibly, graded) ring $A^*(X)$, together with the
structure of pull-backs
$f^*:A^*(Y)\row A^*(X)$ for all maps $f:X\row Y$,
and the structure of push-forwards
$f_*:A_*(X)\row A_*(Y)$ for all projective maps of constant relative dimension
(where $A_*(X):=A^{\ddim(X)-*}(X)$ for equi-dimensional $X$).
These must satisfy certain set of axioms.
We will use the definition of \cite[Definition 2.1]{SU} which is the
definition of M.Levine-F.Morel (\cite[Definition 1.1.2]{LM}) plus the
localization (excision) axiom $(EXCI)$.
So, everywhere below under ``cohomology theory'' we will mean a
theory satisfying \cite[Definition 2.1]{SU}.

In \cite{LM} M.Levine and F.Morel constructed a universal
generalized oriented cohomology theory $\Omega^*$ on $\smk$
called Algebraic Cobordism (see \cite{LP} for an alternative definition).
For a smooth quasi-projective $X$, the ring $\Omega^*(X)$ is additively
generated by the classes $[V\stackrel{v}{\row}X]$ of projective maps with
smooth $V$, modulo certain relations.
This theory has a unique morphism of theories $\Omega^*\row A^*$
to any other theory $A^*$. If $k$ has a complex embedding, there is a natural
{\it topological realization functor} $\Omega^*(X)\row MU^{2*}(X(\cc))$ which is an isomorphism
for $X=\op{Spec}(k)$. In the case of Chow groups, the natural morphism
$\Omega^*\row\op{CH}^*$ is surjective, and moreover,
$\op{CH}^*(X)=\Omega^*(X)\otimes_{\laz}\zz$ (\cite[Theorem 1.2.18]{LM}).
The same is true about $K_0$ by \cite[Theorem 1.2.19]{LM}. Thus, Chow groups and $K_0$
can be reconstructed out of $\Omega^*$.

Since we will not work with the axioms, we will not reproduce them here,
but we mention that any theory which is obtained from Algebraic Cobordism
of Levine-Morel by change of coefficients: $A^*=\Omega^*\otimes_{\laz}A$
is a theory in our sense. These are the {\it free theories} in the sense
of M.Levine-F.Morel (\cite[Remark 2.4.14]{LM}), and are exactly the
{\it theories of rational type} of \cite{SU} (see \cite[Proposition 4.9]{SU}).
In particular, the theories $\Omega^*$, $\op{CH}^*$ and $K_0$ are such.

\subsection{Formal group law}
\label{FGL}

Any theory in the above sense has Chern classes: a set of elements
$c^A_i(\cv)\in A^i(X)$ assigned to each vector bundle $\cv$ on $X$,
which satisfy the Cartan formula, and in the case of a linear bundle $\cl$,
$c^A_1(\cl)=s^*s_*(1)$, where $s:X\row\cl$ is a zero section (see \cite{PS}
or \cite{P})).
By \cite[Theorem 2.3.13]{LM}, any theory $A^*$ as above satisfies the axiom:
\begin{itemize}
\item[$(DIM)$ ] For any line bundles $\cl_1,\ldots,\cl_n$ on a
smooth $X$ of dimension $<n$,
one has: \\
$c^A_1(\cl_1)\cdot\ldots\cdot c^A_1(\cl_n)=0\in A_*(X)$.
\end{itemize}
Thus, any power series in Chern classes can be evaluated on any
element of $A_*(X)$.

To any generalized oriented cohomology theory $A^*$ one can associate
the Formal Group Law $(A,F_A)$. Here $A$ is
the coefficient ring of $A^*$, and
$$
F_A(x,y)=Segre^*(t)\in A[[x,y]]=A^*(\pp^{\infty}\times\pp^{\infty}),
$$
where
$\pp^{\infty}\times\pp^{\infty}\stackrel{Segre}{\lrow}\pp^{\infty}$
is the Segre embedding,
and $x,y,t$ are the $1$-st Chern classes of $\co(1)$ of the respective
copies of $\pp^{\infty}$ (recall, that due to the {\it projective bundle axiom},
$A^*(\pp^{\infty})=\varprojlim A^*(\pp^n)=A[[t]]$).
Denoting the coefficients of $F_A$ as $a^A_{i,j}$, we get:
$$
F_A(x,y)=\sum_{i,j}a^A_{i,j}\cdot x^i\cdot y^j.
$$
The formal group law describes how to compute the $1$-st Chern class
of a tenzor product of two
line bundles in terms of the $1$-st Chern classes of factors:
$$
c^A_1(\cl\otimes\cm)=F_A(c^A_1(\cl),c^A_1(\cm)).
$$
The universal formal group law $(\laz,F_U)$ has canonical morphism
to any other formal group law,
in particular, to $(A,F_A)$. M.Levine and F.Morel have shown that,
in the case of algebraic cobordism,
the respective map is an isomorphism - see \cite[Theorem 1.2.7]{LM}.
In particular, for any field $k$,
$\Omega^*(k)=\laz$ - the Lazard ring. As an abstract graded ring,
$\laz\cong\zz[x_1,x_2,\ldots]$, where $\ddeg(x_i)=i$.

\section{Operations}
\label{Op}

\subsection{General facts}
To study cohomology theories effectively one needs a reasonable notion
of ``morphisms'' between them. If we restrict ourselves to maps respecting
both pull-backs and push-forwards, then there will be not many of such
(for example, there will be only one map $\Omega^*\row A^*$, for any $A^*$).
So, we have to permit more flexibility. The experience of Topology suggests
that the right thing is to require that only pull-backs are respected.

\begin{defi}
\label{operation}
Let $A^*$ and $B^*$ be theories in the above sense.
An operation
$G:A^n\row B^m$ is a morphism of contravariant functors of sets pointed by $0$
(in other words, a transformation commuting with the pull-backs, and
sending zero to zero).
An operation is called additive, if it is a homomorphism of abelian group.
An operation $G:A^*\row B^*$ is called multiplicative, if it is a
homomorphism of rings.
\end{defi}

To each multiplicative operation one can assign certain power series - the
{\it inverse Todd genus}
$\gamma_G=b_0\cdot x+b_1\cdot x^2+b_2\cdot x^3+\ldots\in B[[x]]$, where,
for $x^A=c^A_1(\co(1))$, $x^B=c^B_1(\co(1))$, one has:
$G(x^A)=\gamma_G(x^B)\in B[[x^B]]=B(\pp^{\infty})$.
Also, we have $\ffi_G:A\row B$ - the homomorphism of coefficient rings.
The pair $(\ffi_G,\gamma_G)$ is a morphism of formal group laws:
$(A,F_A)\lrow (B,F_B)$.
In other words,
$$
\ffi_G(F_A)(\gamma_G(u),\gamma_G(v))=\gamma_G(F_B(u,v)).
$$
Of course, the composition of multiplicative operations corresponds to the
composition of morphisms of formal group laws:
$$
(\ffi_{H\circ G}\,,\,\gamma_{H\circ G}(x))=
(\ffi_H\circ\ffi_G\,,\,\ffi_H(\gamma_G)(\gamma_H(x))).
$$
In the case of $A^*=\Omega^*$, and $b_0$ invertible in $B$, the homomorphism
$\ffi_G$ is completely determined by $\gamma_G$.
Namely, $\laz$ is generated as a ring by universal coefficients
$a^{\Omega}_{i,j}$,
and $\ffi_G(a^{\Omega}_{i,j})$ is the respective coefficient of the
formal group law
$F^{\gamma_G}_B(x,y)=\gamma_G(F_B(\gamma_G^{-1}(x),\gamma_G^{-1}(y)))$.
Moreover, from the reorientation procedure of I.Panin-A.Smirnov (see \cite{PS},\cite{P},\cite{Sm1})
and universality of $\Omega^*$ of M.Levine-F.Morel
(see \cite[Theorem 1.2.6]{LM}) one obtains:
\begin{thm} {\rm (Panin-Smirnov+Levine-Morel)}
\label{PSLM}
If $b_0$ is invertible in $B$, then for each
$\gamma=b_0x+b_1x^2+b_2x^3+\ldots\in B[[x]]$, there exists unique
multiplicative operation $G:\Omega^*\row B^*$ with $\gamma_G=\gamma$.
\end{thm}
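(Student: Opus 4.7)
The plan is to combine the reorientation procedure of Panin--Smirnov with the universality of Algebraic Cobordism due to Levine--Morel. The hypothesis that $b_0$ is invertible in $B$ means that $\gamma=b_0 x+b_1 x^2+\ldots$ has a compositional inverse $\gamma^{-1}\in B[[x]]$ satisfying $\gamma(\gamma^{-1}(x))=x=\gamma^{-1}(\gamma(x))$.

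First, I would use $\gamma$ to produce a new orientation on $B^*$. Define a new first Chern class on every line bundle $\cl$ by
\[
c_1^{B'}(\cl):=\gamma(c^B_1(\cl)),
\]
which makes sense since the $(DIM)$ axiom permits the evaluation of any power series in Chern classes. Because $b_0$ is a unit, the Panin--Smirnov reorientation formalism upgrades this assignment to a bona fide generalized oriented cohomology theory $B'^*$ on the same underlying contravariant functor as $B^*$, with associated formal group law
\[
F_{B'}(x,y)=\gamma\bigl(F_B(\gamma^{-1}(x),\gamma^{-1}(y))\bigr).
\]

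Next, I would invoke the Levine--Morel universality of $\Omega^*$: there is a unique morphism of oriented cohomology theories $G:\Omega^*\row B'^*$. Since $B'^*$ and $B^*$ agree as ring-valued contravariant functors, $G$ is a multiplicative operation $\Omega^*\row B^*$ in the sense of Definition \ref{operation}. By construction it sends the universal first Chern class to the reoriented one, so
\[
G(x^\Omega)=c_1^{B'}(\co(1))=\gamma(c^B_1(\co(1)))=\gamma(x^B),
\]
which says exactly $\gamma_G=\gamma$. For uniqueness, any multiplicative $G':\Omega^*\row B^*$ with $\gamma_{G'}=\gamma$ sends $c_1^\Omega(\cl)$ to $\gamma(c^B_1(\cl))=c_1^{B'}(\cl)$ for every line bundle $\cl$, by naturality along the classifying maps $X\row\pp^n$ and passage to the limit $\pp^\infty=\varinjlim\pp^n$. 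Hence $G'$ is a morphism of oriented theories $\Omega^*\row B'^*$, and the Levine--Morel uniqueness forces $G'=G$.

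The main obstacle is concentrated in the first step: one must verify that the reoriented Chern classes really produce an oriented cohomology theory satisfying all of the relevant axioms (the projective bundle axiom, compatibility with pull-backs and push-forwards, and so on). This is precisely the nontrivial content of the Panin--Smirnov reorientation machinery, which becomes available exactly because the leading coefficient $b_0$ is invertible. Once that input is in hand, the existence and uniqueness of $G$ reduce to an application of Levine--Morel universality.
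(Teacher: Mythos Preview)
Your proposal is correct and follows exactly the route the paper indicates: the paper does not spell out a proof but simply attributes the result to the Panin--Smirnov reorientation procedure combined with Levine--Morel's universality of $\Omega^*$, and your argument is a faithful expansion of that sketch. The one place you quietly lean on nontrivial input---that a multiplicative operation with $\gamma_{G'}=\gamma$ is automatically a morphism of oriented theories into $B'^*$ (i.e.\ also commutes with push-forwards)---is precisely part of the Panin--Smirnov package, so this is consistent with the paper's attribution.
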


One can easily introduce the notion of a {\it stable operation} -
see (for example) \cite[Definition 3.4]{SU}.
As in Topology, these are operations commuting with some sort of suspension.
To define the latter we need to consider theories on pairs $(X,U)$ where
$U$ is an open subvariety of a smooth variety $X$. Fortunately, every theory in
our sense naturally extends to pairs by the rule: $A^*((X,U)):=\kker(A^*(X)\row A^*(U))$,
where we have to admit the non-unital rings into the game.
And our suspension is just the smash-product with $(\pp^1,\pp^1\backslash\{0\})$ - see \cite{SU}.
An operation extends naturally to pairs as well, as long as it is "pointed" (sends $0$ to $0$).
Hence, we can talk about stability.
We should mention the following simple result
(see, for example, \cite[Proposition 3.9]{SU}):

\begin{prop}
\label{multstable}
Let $G:A^*\row B^*$ be a multiplicative operation with
$\gamma_G=b_0x+b_1x^2+\ldots$.
Then $G$ is stable if and only if $b_0=1$.
\end{prop}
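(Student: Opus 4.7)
The plan is to translate the stability condition into an identity inside the $\pp^1$-suspension and then use the $(DIM)$ axiom to make that identity transparent.

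For any smooth $X$, the $\pp^1$-suspension
$$\Sigma\colon A^*(X) \row A^{*+1}\bigl((X\times\pp^1,\, X\times(\pp^1\setminus\{0\}))\bigr),\qquad \alpha \mapsto \alpha\cdot x^A,$$
is an isomorphism, where $x^A=c_1^A(\co(1))$ is pulled back from $\pp^1$. Extending the pointed operation $G$ to pairs via $A^*((X,U))=\kker(A^*(X)\row A^*(U))$, the stability of $G$ amounts to $G\circ\Sigma_A = \Sigma_B\circ G$; equivalently, using multiplicativity, to the identity $G(\alpha\cdot x^A) = G(\alpha)\cdot x^B$ for every smooth $X$ and every $\alpha\in A^*(X)$.

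The key computation is the simplification of $G(x^A)$. Since $\ddim\pp^1 = 1 < 2$, the $(DIM)$ axiom forces $(x^A)^2=0$ in $A^*(\pp^1)$, and pulling back gives $(x^A)^2=0$ in $A^*(X\times\pp^1)$; analogously $(x^B)^2=0$ in $B^*(X\times\pp^1)$. Therefore
$$G(x^A) \,=\, \gamma_G(x^B) \,=\, b_0\cdot x^B + b_1(x^B)^2 + \ldots \,=\, b_0\cdot x^B,$$
and multiplicativity of $G$ yields $G(\alpha\cdot x^A) = b_0\cdot G(\alpha)\cdot x^B$.

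Comparing with the desired $G(\alpha)\cdot x^B$, stability becomes the relation $(b_0-1)\cdot G(\alpha)\cdot x^B = 0$ for all $X$ and $\alpha$. Specializing to $X=\spec(k)$ and $\alpha=1$ (so that $G(\alpha)=1$ by multiplicativity), the class $(b_0-1)\cdot x^B$ must vanish in $B^1((\pp^1,\pp^1\setminus\{0\}))$; since $\Sigma_B$ is an isomorphism sending $1\mapsto x^B$, this forces $b_0=1$. The converse implication is immediate from the same displayed identity. I do not expect a genuine obstacle here: the argument rests only on the multiplicativity of $G$, the behaviour of the pointed extension to pairs, and the $(DIM)$ axiom on $\pp^1$, each of which has already been put in place in the preceding sections.
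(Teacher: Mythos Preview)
Your argument is correct and is the natural one. Note, however, that the paper does not supply its own proof of this proposition: it simply quotes the result from \cite[Proposition 3.9]{SU}. So there is nothing to compare against here beyond saying that your write-up is essentially what that reference contains --- identify the suspension with multiplication by $c_1(\co(1))$ on the pair $(\pp^1,\pp^1\setminus\{0\})$, use multiplicativity and the vanishing of $(x^B)^2$ on $\pp^1$ to get $G(\alpha\cdot x^A)=b_0\,G(\alpha)\cdot x^B$, and read off $b_0=1$ from the case $\alpha=1$ via the suspension isomorphism.
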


The most important, and, in a sense, universal example of a stable multiplicative operation is
provided by the {\it Total Landweber-Novikov operation} (see \cite[Example 4.1.25]{LM}):
$$
S^{Tot}_{L-N}:\Omega^*\row\Omega^*[b_1,b_2,\ldots],
$$
with $\gamma_{S^{Tot}_{L-N}}(x)=x+b_1x^2+b_2x^3+\ldots$.
Individual Landweber-Novikov operations are coefficients of the total one at particular $b$-monomials.

Any stable multiplicative operation $G:\Omega^*\row B^*$ is a specialization of $S^{Tot}_{L-N}$.
That is, $G=\eta\circ S^{Tot}_{L-N}$, where $\eta:\Omega^*[b_1,b_2,\ldots]\row B^*$ is
a morphism of theories sending $b_i$'s to the coefficients of $\gamma_G$.
Similarly, any multiplicative operation as in Theorem \ref{PSLM} (i.e., with invertible $b_0$),
is a {\it generalized specialization} of $S^{Tot}_{L-N}$. In other words, it is the composition of
the reparametrization $\cdot b_0^{codim}:\Omega^*\row\Omega^*$ and the specialization as above corresponding
to $\frac{\gamma_G}{b_0}$ - see \cite[Section 3]{ACMLR}.

\subsection{Case of a theory of rational type}

Proposition \ref{PSLM} provides an effective tool in
constructing stable operations from Algebraic Cobordism theory
elsewhere. But in many situations one has to work with operations
where $b_0$ is not invertible in the coefficient ring of the target theory
$B^*$. The needed tools are provided by the results of \cite{SU} on {\it theories of
rational type}.

These theories are defined in \cite[Definition 4.1]{SU}, but for us it
will be important that these are exactly the {\it free theories} of
M.Levine-F.Morel, that is, the theories obtained from $\Omega^*$ by
change of coefficients: $A^*=\Omega^*\otimes_{\laz}A$.

The principal result on multiplicative operations here is:

\begin{thm} {\rm (\cite[Theorem 6.8]{SU})}
\label{multFGL}
Let $A^*$ be theory of rational type, and $B^*$ be any theory
in the above sense.
The assignment $G\leftrightarrow (\ffi_G,\gamma_G)$ defines a
$1$-to-$1$ correspondence between the set of multiplicative operations
$A^*\stackrel{G}{\row}B^*$ and the set of homomorphisms $(A,F_A)\row (B,F_B)$
of the respective formal group laws.
\end{thm}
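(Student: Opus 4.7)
The plan is to split the statement into three tasks: well-definedness of the assignment $G \mapsto (\ffi_G, \gamma_G)$ into FGL-morphisms, injectivity, and surjectivity; the last task will invoke the classification Theorem \ref{MAIN} for additive operations out of theories of rational type.

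For well-definedness, given a multiplicative $G : A^* \to B^*$, set $\ffi_G := G|_A$ and define $\gamma_G \in B[[x^B]]$ by $G(x^A) = \gamma_G(x^B)$ on $\pp^\infty$. Applying $G$ to the identity $F_A(x^A, y^A) = \op{Segre}^*(t^A)$ in $A^*(\pp^\infty \times \pp^\infty)$, and using that $G$ commutes with $\op{Segre}^*$, is multiplicative, and restricts to $\ffi_G$ on $A$, one obtains
\[
\ffi_G(F_A)(\gamma_G(x), \gamma_G(y)) = \gamma_G(F_B(x, y)),
\]
so $(\ffi_G, \gamma_G)$ is a FGL-morphism. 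For injectivity, Theorem \ref{MAIN} says any additive operation out of $A^*$ is determined by its restrictions to the $(\pp^\infty)^{\times r}$; on $A^*((\pp^\infty)^{\times r}) = A[[x_1, \ldots, x_r]]$ a multiplicative $G$ with prescribed $(\ffi_G, \gamma_G)$ is forced on monomials $\lambda \cdot \prod x_i^{n_i}$ to act as $\ffi_G(\lambda) \cdot \prod \gamma_G(x_i)^{n_i}$, so $G$ is fully recovered from $(\ffi_G, \gamma_G)$.

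For surjectivity, given a FGL-morphism $(\ffi, \gamma)$, I would define a candidate operation on each $(\pp^\infty)^{\times r}$ by
\[
\sum_\alpha \lambda_\alpha \, x_1^{\alpha_1} \cdots x_r^{\alpha_r} \;\longmapsto\; \sum_\alpha \ffi(\lambda_\alpha) \, \gamma(x_1)^{\alpha_1} \cdots \gamma(x_r)^{\alpha_r},
\]
and verify the compatibility conditions of Theorem \ref{MAIN}. Compatibilities with projections, the symmetric-group action, and partial diagonals are manifest from the formula. The only substantive check is compatibility with the Segre embedding $\pp^\infty \times \pp^\infty \to \pp^\infty$, which unwinds to exactly the FGL-morphism identity $\ffi(F_A)(\gamma(x), \gamma(y)) = \gamma(F_B(x, y))$, and so holds by hypothesis.

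The main obstacle I anticipate is promoting multiplicativity from the cellular data to arbitrary smooth $X$: Theorem \ref{MAIN} produces only an \emph{additive} $G$, and while multiplicativity on $(\pp^\infty)^{\times r}$ is built into the defining formula, on a general $X$ it is not automatic. The strategy is to regard the discrepancy $(\alpha, \beta) \mapsto G(\alpha \cdot \beta) - G(\alpha) \cdot G(\beta)$ as a bi-natural transformation, express it on $X$ as the pull-back along the diagonal $X \to X \times X$ of an external-product expression on $X \times Y$, and apply Theorem \ref{MAIN} (or its bi-variate analogue) to conclude that vanishing on products of projective spaces forces vanishing identically.
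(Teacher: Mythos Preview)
The paper does not prove this theorem; it is quoted from \cite[Theorem 6.8]{SU} and used as input, so there is no in-paper argument to compare against. Your outline is nonetheless close in spirit to how the result is established in \cite{SU} (and to the poly-operation machinery of \cite{PO}), so let me comment on the proposal itself.

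The well-definedness and injectivity parts are fine. Two small omissions in the surjectivity check: Theorem~\ref{MAIN} classifies operations out of a \emph{single} graded piece $A^n$, so you are tacitly applying it for each $n$ separately and then assembling the pieces---this is harmless but should be said; and you skipped condition~(iv), which amounts to $\gamma(0)=0$ and holds since $\gamma(x)=b_0x+\cdots$.

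The genuine gap is the one you flag yourself. Theorem~\ref{MAIN} is a one-variable statement: it tells you that an operation $A^n\to B^*$ is determined by its values on $(\pp^\infty)^{\times l}$. Your discrepancy map $D(\alpha,\beta)=G(\alpha\beta)-G(\alpha)G(\beta)$ is a transformation of \emph{two} arguments, and to conclude $D\equiv 0$ from its vanishing on products of projective spaces you need a \emph{bi-operation} (poly-operation) analogue of Theorem~\ref{MAIN}. That is precisely what \cite{PO} develops, and it is not a formal consequence of the one-variable Theorem~\ref{MAIN} alone: freezing one argument only shows that $D(\alpha,-)$ vanishes when $\alpha$ comes from a product of projective spaces, not for general $\alpha$. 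So either invoke the poly-operation result from \cite{PO} explicitly, or reproduce the relevant two-variable version; as written, the last paragraph is a correct plan but not yet a proof.
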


This immediately gives an extension of Theorem \ref{PSLM}.

\begin{thm} {\rm (\cite[Theorem 6.9]{SU})}
\label{neobrB0}
Let $B^*$ be any theory in the above sense, and
$b_0\in B$ be not a zero-divisor.
Let $\gamma=b_0x+b_1x^2+b_2x^3+\ldots\in B[[x]]$. Then there exists a
multiplicative operation
$\Omega^*\stackrel{G}{\row}B^*$ with $\gamma_G=\gamma$ if and only if
the shifted FGL
$F_B^{\gamma}\in B[b_0^{-1}][[x,y]]$ has coefficients in $B$ (that is,
has no denominators).
In this case, such an operation is unique.
\end{thm}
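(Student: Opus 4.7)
The plan is to reduce Theorem \ref{neobrB0} to Theorem \ref{multFGL} by analysing exactly which pairs $(\ffi,\gamma)$ with $\gamma$ prescribed define a morphism of formal group laws $(\laz,F_U)\to(B,F_B)$. Since $\Omega^*$ is a theory of rational type, Theorem \ref{multFGL} tells us that multiplicative operations $G:\Omega^*\to B^*$ are in bijection with such morphisms, so the whole question is whether the coefficient-ring homomorphism $\ffi_G:\laz\to B$ can be produced from the given $\gamma$.

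First I would observe that since $b_0$ is not a zero-divisor, the natural map $B\hookrightarrow B[b_0^{-1}]$ is injective. Over $B[b_0^{-1}]$ the series $\gamma$ is invertible, so we may form
$$
F_B^{\gamma}(x,y)=\gamma(F_B(\gamma^{-1}(x),\gamma^{-1}(y)))\in B[b_0^{-1}][[x,y]],
$$
which is itself a formal group law. The condition that $(\ffi,\gamma)$ is a morphism of formal group laws,
$$
\ffi(F_U)(\gamma(u),\gamma(v))=\gamma(F_B(u,v)),
$$
is, after substituting $u=\gamma^{-1}(x)$, $v=\gamma^{-1}(y)$ and working in $B[b_0^{-1}][[x,y]]$, exactly the equality $\ffi(F_U)(x,y)=F_B^{\gamma}(x,y)$. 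Since $\laz$ is generated as a ring by the universal coefficients $a^{\Omega}_{i,j}$ of $F_U$, this forces $\ffi$ to be the unique homomorphism $\laz\to B[b_0^{-1}]$ sending $a^{\Omega}_{i,j}$ to the corresponding coefficient of $F_B^{\gamma}$.

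Now the whole question becomes whether this forced $\ffi$ lands in the subring $B\subseteq B[b_0^{-1}]$. If $F_B^{\gamma}$ has coefficients in $B$, then $\ffi$ defines a ring homomorphism $\laz\to B$ and, together with $\gamma$, gives a morphism of formal group laws $(\laz,F_U)\to(B,F_B)$; applying Theorem \ref{multFGL} produces the desired multiplicative operation $G$ with $\gamma_G=\gamma$. Conversely, if such a $G$ exists, then $\ffi_G:\laz\to B$ together with $\gamma$ must agree, after composition with $B\hookrightarrow B[b_0^{-1}]$, with the unique homomorphism above, so the coefficients of $F_B^{\gamma}$ automatically lie in $B$. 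Uniqueness of $G$ follows at once: two operations with the same $\gamma_G=\gamma$ would have the same $\ffi_G$ (again because injectivity of $B\hookrightarrow B[b_0^{-1}]$ makes $\ffi_G$ rigid on the generators $a^{\Omega}_{i,j}$), and hence coincide by Theorem \ref{multFGL}.

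The only non-routine point is the use of the non-zero-divisor hypothesis on $b_0$: it is needed twice, first to make sense of $\gamma^{-1}$ and therefore of $F_B^{\gamma}$, and second to guarantee that $B\hookrightarrow B[b_0^{-1}]$ is injective so that the condition ``coefficients of $F_B^{\gamma}$ lie in $B$'' both determines $\ffi_G$ uniquely and is genuinely an obstruction rather than a choice. Everything else is a formal manipulation of power series, together with the input from Theorem \ref{multFGL} which itself encapsulates the hard work on theories of rational type from \cite{SU}.
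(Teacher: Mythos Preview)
Your proposal is correct and matches the paper's intended approach: the paper simply states that Theorem~\ref{neobrB0} ``immediately'' follows from Theorem~\ref{multFGL}, and what you have written is precisely the spelling-out of that deduction. The key points --- that $\Omega^*$ is of rational type, that $b_0$ not a zero-divisor makes $B\hookrightarrow B[b_0^{-1}]$ injective so the forced $\ffi$ landing in $B$ is equivalent to $F_B^{\gamma}$ having $B$-coefficients, and that uniqueness then comes for free --- are all in order.
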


This result will enable us to construct the Steenrod operations of
T.tom Dieck style below.

The methods of \cite{SU} and \cite{PO} permit to work with non-multiplicative operations
as well. The main result (see also \cite[Theorem 5.1]{SU} for the additive version)
which implies all the rest is the following:

\begin{thm} {\rm (\cite[Theorem 5.1]{PO})}
\label{MAIN}
Let $A^*$ be a {\it theory of rational type}, and
$B^*$ be any theory in the above sense.
Then the set of operation $A^n\stackrel{G}{\row}B^*$ on $\smk$ is
identified with the set of transformations
$$
A^n((\pp^{\infty})^{\times l})\stackrel{G}{\row}B^*((\pp^{\infty})^{\times l}),\,\,\text{for}\,\,
l\in\zz_{\geq 0}
$$
commuting with the pull-backs for:
\begin{itemize}
\item[$(i)$ ] the action of the symmetric group ${\frak{S}}_l$;
\item[$(ii)$ ] the partial diagonals;
\item[$(iii)$ ] the partial Segre embeddings;
\item[$(iv)$ ] $(\op{Spec}(k)\hookrightarrow\pp^{\infty})\times(\pp^{\infty})^{\times r}$,
$\forall r$;
\item[$(v)$ ] the partial projections.
\end{itemize}
\end{thm}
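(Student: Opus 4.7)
The plan is to reduce the study of operations $A^n\row B^*$ on the whole category $\smk$ to compatible systems of transformations on the concrete family $\{(\pp^{\infty})^{\times l}\}_{l\geq 0}$. The basic insight is that a theory of rational type $A^*=\Omega^*\otimes_{\laz} A$ admits a presentation (developed in \cite{SU}) whose generators and relations are ultimately controlled by tuples of line bundles, and $(\pp^{\infty})^{\times l}$ is the universal smooth variety carrying an ordered $l$-tuple of line bundles; so knowing an operation on these products, together with the behaviour under the basic structural maps among them, should determine it everywhere.

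Necessity is immediate: any operation $G$ on $\smk$ commutes with every pull-back, and in particular with the five classes of maps (i)--(v) among the various $(\pp^{\infty})^{\times l}$, so its restriction to this subcategory gives the required family of compatible transformations.

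For sufficiency, suppose given a compatible family $\{G_l\}_{l\geq 0}$. For a smooth quasi-projective $X$ and a class $\alpha\in A^n(X)$, one uses the rational-type presentation of $A^*$ together with the Levine--Morel generators of $\Omega^*(X)$ and the splitting principle to exhibit $\alpha$ in terms of universal polynomial classes in first Chern classes of line bundles. Each such polynomial class lives on some $(\pp^{\infty})^{\times l}$ and is pulled back to $X$ via an appropriate classifying map; applying $G_l$ to the universal class and transferring the result back to $X$ yields a candidate value $G(\alpha)$.

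The main obstacle, and the technical heart of the proof, is to show that this prescription is well-defined and produces a genuine operation, that is, is independent of the chosen presentation and commutes with all pull-backs (not just those between products of $\pp^{\infty}$). Two presentations of the same class differ by a sequence of elementary moves on the underlying tuple of line bundles, and these moves match the listed compatibilities exactly: reordering of a tuple (handled by (i)), coincidence of two line bundles (handled by (ii)), replacement of a pair $(\cl_i,\cl_j)$ by its tensor product $\cl_i\otimes\cl_j$ via the formal group law (handled by (iii), since the Segre map is the classifying map of the tensor product of the tautological bundles), adjoining the trivial line bundle (handled by (iv)), and deletion of an unused factor (handled by (v)). The statement that (i)--(v) exhaust the relations among such presentations is the central content of \cite[Theorem 5.1]{PO}, refining the additive version \cite[Theorem 5.1]{SU}; once it is in hand, compatibility of $G$ with an arbitrary pull-back $f:X\row Y$ follows by naturality of classifying maps, reducing it to the same five compatibilities already assumed on the $G_l$.
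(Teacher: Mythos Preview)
The paper does not prove this theorem; it is quoted as \cite[Theorem 5.1]{PO} and used as a black box. So there is no proof here to compare against, and your proposal is not so much an alternative argument as an informal gloss on why the statement should hold, with an explicit deferral to \cite{PO} for the actual work. In that limited sense your write-up matches the paper exactly: both point to \cite{PO}.

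That said, your sketch of the sufficiency direction contains a genuine oversimplification worth flagging. You write that a class $\alpha\in A^n(X)$ can be ``exhibited in terms of universal polynomial classes in first Chern classes of line bundles'' and then pulled back from some $(\pp^{\infty})^{\times l}$. But general elements of $\Omega^*(X)$ (and hence of $A^*(X)$) are generated by classes $[V\stackrel{v}{\row}X]$ of projective maps, which involve \emph{push-forwards}; they are not pull-backs of polynomials in Chern classes from a single product of projective spaces. The splitting principle does not by itself reduce such classes to pull-backs from $(\pp^{\infty})^{\times l}$. The actual mechanism in \cite{SU,PO} is an inductive construction along the codimension filtration of a theory of rational type, together with a Riemann--Roch-type formula (the condition $(b_{ii})$ of \cite[Definition 5.5]{PO}, invoked later in this paper in the proof of Proposition~\ref{RR}) that dictates how the operation must behave under push-forwards along regular embeddings once its behaviour on Chern classes is fixed. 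Your account of why (i)--(v) are the right compatibilities on the Chern-class side is accurate, but the step ``two presentations differ by elementary moves matched by (i)--(v)'' hides precisely this push-forward issue, which is where the real content of \cite{PO} lies.
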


In Topology an analogous result was obtained by T.Kashiwabara - see \cite[Theorem 4.2]{Kash}.

Under an {\it additive subtheory} $C^*$ of a theory $B^*$ we will mean an
assignment $X\mapsto C^*(X)$, where $C^*(X)\subset B^*(X)$ is an additive subgroup,
closed under pull-backs and push-forwards, and satisfying the axioms
$(A1),(A2),(PB),(EH),(EXCI)$ of \cite[Definition 2.1]{SU} (thus, only the
axioms $(D1),(D2)$ are substituted by the notion of an additive subobject of such).

Theorem \ref{MAIN} immediately implies:

\begin{prop}
\label{Qdiv}
Let $A^*$ be a {\it theory of rational type},
$B_i^*$, $i=1,2$ be any theories in our sense,
and $C_i^*\subset B_i^*$, $i=1,2$ be additive subtheories.
Let $Q:C_1^*\row C_2^*$ be an additive operation, such that $Q|_{(\pp^{\infty})^{\times l}}$ is injective,
for all $l\in\zz_{\geq 0}$,
and $G:A^n\row C_2^*$ be an operation
such that $image(G|_{(\pp^{\infty})^{\times l}})\subset image(Q|_{(\pp^{\infty})^{\times l}})$,
for all $l\in\zz_{\geq 0}$.
Then there exists unique operation
$H:A^n\row C_1^*$ such that $G=Q\circ H$.
\end{prop}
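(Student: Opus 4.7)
The plan is a direct application of Theorem \ref{MAIN}. I would first define $H$ on powers of $\pp^{\infty}$ by ``inverting'' $Q$, then verify the five compatibility conditions, and finally use the uniqueness part of Theorem \ref{MAIN} both to obtain $G=Q\circ H$ and to establish uniqueness of $H$.

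For each $l\geq 0$, the image hypothesis $\op{image}(G|_{(\pp^{\infty})^{\times l}})\subset \op{image}(Q|_{(\pp^{\infty})^{\times l}})$ together with the injectivity of $Q|_{(\pp^{\infty})^{\times l}}$ lets me define
$$
H_l \;:=\;(Q|_{(\pp^{\infty})^{\times l}})^{-1}\circ G|_{(\pp^{\infty})^{\times l}}\colon A^n((\pp^{\infty})^{\times l})\lrow C_1^*((\pp^{\infty})^{\times l}).
$$
This is pointed, since $Q(H_l(0))=G(0)=0=Q(0)$ forces $H_l(0)=0$ by injectivity of $Q$.

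Next, I would check compatibilities (i)--(v) of Theorem \ref{MAIN} by a diagram chase: for any structural pull-back $f^*$ arising from one of the listed morphisms $f\colon (\pp^{\infty})^{\times l_1}\row (\pp^{\infty})^{\times l_2}$, the fact that both $G$ and $Q$ commute with pull-backs gives
$$
Q\circ(f^*\circ H_{l_2}) \;=\; f^*\circ Q\circ H_{l_2} \;=\; f^*\circ G \;=\; G\circ f^* \;=\; Q\circ(H_{l_1}\circ f^*),
$$
and injectivity of $Q|_{(\pp^{\infty})^{\times l_1}}$ cancels $Q$ on the left, yielding $f^*\circ H_{l_2} = H_{l_1}\circ f^*$. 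By Theorem \ref{MAIN} the family $\{H_l\}_l$ assembles into a unique operation $H\colon A^n\row C_1^*$.

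Finally, both $G$ and $Q\circ H$ are operations $A^n\row C_2^*$ which agree on every power of $\pp^{\infty}$ by construction, hence coincide globally by the uniqueness clause of Theorem \ref{MAIN}; and any other operation $H'$ with $Q\circ H' = G$ agrees with $H$ on powers of $\pp^{\infty}$ by injectivity of $Q$, and hence globally again by Theorem \ref{MAIN}. The only technical wrinkle I anticipate is that $C_i^*$ are additive subtheories rather than full theories in the sense of \ref{MAIN}; this is handled by first producing $H$ via Theorem \ref{MAIN} as a $B_1^*$-valued operation and then noting that its values land in $C_1^*$ already on powers of $\pp^{\infty}$ -- this being the essential content of the proposition.
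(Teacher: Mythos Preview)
Your proposal is correct and follows essentially the same approach as the paper: define $H$ on powers of $\pp^{\infty}$ by inverting $Q$, use injectivity of $Q$ to verify the compatibilities of Theorem \ref{MAIN}, and then invoke that theorem both to extend $H$ and to establish $G=Q\circ H$ and uniqueness. The paper's proof makes the identical moves, including the same remark that the resulting operation a priori lands in $B_1^*$ but in fact stays in $C_1^*$ because it does so on $(\pp^{\infty})^{\times l}$ (a fact visible from how the operation is built in the proof of Theorem \ref{MAIN}, using that $C_1^*$ is closed under pull-backs and push-forwards).
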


\begin{proof}
By our condition, the transformation:
$$
A^n((\pp^{\infty})^{\times l})\stackrel{G}{\row}C_2^*((\pp^{\infty})^{\times l}),\,\,\text{for}\,\,
l\in\zz_{\geq 0}
$$
corresponding to $G$ can be written in a unique way as the composition of some transformation
$$
A^n((\pp^{\infty})^{\times l})\stackrel{H}{\row}C_1^*((\pp^{\infty})^{\times l}),\,\,\text{for}\,\,
l\in\zz_{\geq 0}
$$
and the operation $Q$.
The fact that $H$-transformations will commute with all the pull-backs prescribed in the
Theorem \ref{MAIN} follows from the respective property for $G$ and $Q$ together with the injectivity
of $Q$. Hence, it can be extended to a unique
operation $H:A^n\row C_1^*$ (a'priori we get an operation $H:A^n\row B_1^*$, but
it lands in $C_1^*$, because it is so on $(\pp^{\infty})^{\times l}$, for all $l$ - can be seen from
the proof of Theorem \ref{MAIN}).
The fact that $G=Q\circ H$ is clear from the same Theorem \ref{MAIN}
\Qed
\end{proof}

\begin{cor}
\label{div}
Let $A^*$ be a {\it theory of rational type}, and
$B^*$ be any theory in the above sense.
\begin{itemize}
\item[$(1)$ ] Let $B^*=C^*\oplus D^*$ be an additive decomposition, and
$b\in B$ be such an element, that the composition
$m_b:C\stackrel{i_C}{\hookrightarrow}B
\stackrel{\cdot b}{\row}B\stackrel{\pi_C}{\twoheadrightarrow}C$ is injective.
Let $G:A^n\row B^*$ be an operation
such that $image(\pi_C\circ G|_{(\pp^{\infty})^{\times l}})\subset
image(m_b|_{(\pp^{\infty})^{\times l}})$, for all $l\in\zz_{\geq 0}$. Then there exists unique operation
$H:A^n\row C^*\hookrightarrow B^*$ such that
$$
(G-b\cdot H):A^n\row D^*\hookrightarrow B^*.
$$
\item[$(2)$ ]
Let $G:A^n\row B^*$ be an operation, and $b\in B$ be not a zero-divisor
such that the $image(G|_{(\pp^{\infty})^{\times l}})$, for all $l\in\zz_{\geq 0}$
is divisible by $b$. Then there exists unique operation
$H:A^n\row B^*$ such that $G=b\cdot H$.
\end{itemize}
\end{cor}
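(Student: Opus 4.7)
The plan is to view both parts as direct applications of Proposition \ref{Qdiv}, where the main task is, in each case, to produce an appropriate additive operation $Q$ whose injectivity on $(\pp^{\infty})^{\times l}$ can be extracted from the given non-zero-divisor/injectivity hypothesis.

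For part $(2)$, I would take $C_1^* = C_2^* = B^*$ (as its own additive subtheory) and let $Q : B^* \row B^*$ be the operation of multiplication by $b$, which is additive. By the projective bundle axiom, $B^*((\pp^{\infty})^{\times l}) = B[[t_1, \ldots, t_l]]$ is a power series ring over the coefficient ring $B$, and $Q$ acts coefficient-wise. Since $b$ is not a zero-divisor in $B$, this coefficient-wise multiplication is injective on each $B[[t_1, \ldots, t_l]]$, which is precisely the injectivity required in Proposition \ref{Qdiv}. The divisibility hypothesis on the image of $G$ is exactly the image-containment hypothesis of that proposition, and we obtain the unique $H : A^n \row B^*$ with $G = Q \circ H = b \cdot H$.

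For part $(1)$, since $B^* = C^* \oplus D^*$ is an additive decomposition of theories, the projection $\pi_C : B^* \row C^*$ is itself an additive operation, so $\pi_C \circ G : A^n \row C^*$ is an operation pointed by $0$. Take $C_1^* = C_2^* = C^*$ and set $Q := m_b = \pi_C \circ (\cdot b) \circ i_C : C^* \row C^*$; this is additive by construction. Again, the projective bundle axiom gives $C^*((\pp^{\infty})^{\times l}) = C[[t_1, \ldots, t_l]]$, and $m_b$ acts coefficient-wise, so the assumed injectivity of $m_b$ at the level of the coefficient ring $C$ lifts to injectivity of $Q$ on each $(\pp^{\infty})^{\times l}$. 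Applying Proposition \ref{Qdiv} to $\pi_C \circ G$ then produces a unique operation $H : A^n \row C^*$ with $\pi_C \circ G = m_b \circ H = \pi_C(b \cdot H)$. Consequently $\pi_C(G - b \cdot H) = 0$, i.e.\ $G - b \cdot H$ factors through $\ker(\pi_C) = D^*$, as required. Uniqueness of $H$ transfers directly from the uniqueness clause of Proposition \ref{Qdiv}.

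The only delicate point I expect to need care is the passage from injectivity at the level of the coefficient ring to injectivity at the level of $(\pp^{\infty})^{\times l}$. This is handled by the projective bundle axiom plus the elementary fact that a non-zero-divisor in $R$ remains a non-zero-divisor in $R[[t_1, \ldots, t_l]]$ since multiplication by it acts coefficient-wise; the analogous statement for $m_b$ in part $(1)$ is identical. Beyond this verification, both statements are a purely formal consequence of Proposition \ref{Qdiv}.
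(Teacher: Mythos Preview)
Your proposal is correct and follows essentially the same route as the paper: both parts are direct applications of Proposition~\ref{Qdiv} with $Q=m_b$ (respectively, multiplication by $b$) acting on the additive subtheory $C^*$ (respectively, $B^*$), applied to $\pi_C\circ G$ (respectively, $G$). The only cosmetic difference is that the paper deduces (2) from (1) by setting $C^*=B^*$ and $D^*=0$, whereas you argue (2) directly; your added remark that injectivity of $m_b$ on the coefficient ring propagates to $(\pp^{\infty})^{\times l}$ via the projective bundle axiom is a detail the paper leaves implicit.
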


\begin{proof}
1) Apply Proposition \ref{Qdiv} with $B_i=B$, $C_i=C$, for $i=1,2$,
$Q$ - the composition:
$C^*\hookrightarrow B^*\stackrel{\cdot b}{\row}B^*\twoheadrightarrow C^*$
and $\pi_C\circ G:A^n\row C^*$.
We obtain a unique operation $H:A^n\row C^*$ such that
$\pi_C\circ G=Q\circ H$. Or, in other words, $(G-b\cdot H):A^n\row D^*\hookrightarrow B^*$.

2) Take $C^*=B^*$ and $D^*=0$ in (1).
\Qed
\end{proof}

\section{Two types of Steenrod operations on Cobordisms}
\label{2St}

In Topology, all additive operations on singular homology modulo $p$
are generated by the Steenrod operations (and Bockstein).
These can be organized into a multiplicative Total Steenrod
operation, which can be constructed as follows.
Denoting as $H^*_{\zz/p}$ - the $\zz/p$-equivariant singular
homology, one gets a natural map
$$
H^m(X,\zz/p)\row H^{pm}_{\zz/p}(X^{\times p},\zz/p)\row H^{pm}_{\zz/p}(X,\zz/p),
$$
where the last arrow is induced by the diagonal embedding
$X\stackrel{\Delta}{\row}X^{\times p}$. And since the $\zz/p$-action
on $X$ is trivial, the target group can be identified with
$H^{pm}(X\times\op{B}\zz/p,\zz/p)$, which is the component of degree $pm$ of
$H^*(X,\zz/p)\otimes_{\zz/p}H^*(\op{B}\zz/p,\zz/p)=H^*(X,\zz/p)[[t]][y]/(y^2-c)$,
where $deg(y)=1$, $deg(t)=2$, and $c=0$, for $p>2$, and $c=t$, for $p=2$.
One obtains a multiplicative operation
$$
Sq:H^*(X,\zz/p)\row H^*(X,\zz/p)[[t]][y]/(y^2-c),
$$
whose only non-trivial components at monomials in $t$ are
of degrees divisible by $(p-1)$. These are the {\it individual Steenrod
operations}.
P.Brosnan has shown that the above construction goes through in the
algebro-geometric context (for Chow groups modulo $p$) as well - see \cite{Br}
(in a more general case of motivic cohomology these operations were produced
previously by V.Voevodsky by a different construction - see \cite{VoOP}).

Steenrod operations on singular cohomology modulo prime can be
extended to the theory of complex-oriented cobordisms $MU^*$
in (at least) two ways.

The first construction due to T.tom Dieck (\cite{tD}) uses the
same geometric approach with the $\zz/p$-equivariant cohomology replaced
by the $\zz/p$-equivariant cobordism, and gives the multiplicative operation:
$$
Sq:MU^*(X)\row MU^*(X\times\op{B}\zz/p)=
MU^*(X)[[t]]/(p\cdot_{MU}t)\row
MU^*(X)[[t]]/(\textstyle\frac{p\cdot_{MU}t}{t}).
$$
Note, in particular, that one gets a completely canonical operation
depending on $p$ only.

The second construction due to D.Quillen (\cite{Qu71}) is based
on the universal property of complex-oriented cobordism which implies that
any power series $\gamma=b_0x+b_1x^2+b_2x^3+\ldots$ with $b_0\in B$ invertible
corresponds to a unique multiplicative operation $G:MU^*\row B^*$. It remains
to specify $B^*$ and $\gamma$. One chooses representatives
$\{i_j,\,0<j<p\}$ of all non-zero cosets modulo $p$, and defines
$\gamma=x\prod_{j=1}^{p-1}(x+_{MU}i_j\cdot_{MU}t)\in\laz[[t]][[x]]$.
This gives a multiplicative operation:
$$
St(\ov{i}):MU^*\row MU^*[\iis^{-1}][[t]][t^{-1}],
$$
where $\iis:=\prod_{j=1}^{p-1}i_j$.
Note, that this operation depends on the choice of coset representatives
(of course, one can take $i_j=j$, for $j=1,\ldots,p-1$, but for general $p$,
such a choice will be about as good as any other).
As was shown by D.Quillen, his operation agrees with the one of T.tom Dieck.
Namely, there is the following commutative diagram:
$$
\xymatrix @-0.2pc{
MU^* \ar @{->}[r]^(0.3){St(\ov{i})} \ar @{->}[d]_(0.5){Sq}&
MU^*[\iis^{-1}][[t]][t^{-1}] \ar @{->}[d]^(0.5){}\\
MU^*[[t]]/(\frac{p\cdot_{MU}t}{t}) \ar @{->}[r]_(0.5){} &
MU^*[[t]][t^{-1}]/(p\cdot_{MU}t).
}
$$

The version of D.Quillen can be easily extended to the Algebraic Cobordism
of M.Levine-F.Morel using the universality of $\Omega^*$ (\cite[Theorem 1.2.6]{LM}). So, one gets
a multiplicative operation
$$
St(\ov{i}):\Omega^*\row \Omega^*[\iis^{-1}][[t]][t^{-1}],
$$
with the same $\gamma$ as above.
The situation with the version of T.tom Dieck is more delicate.
It is easy to define the $\zz/p$-equivariant Algebraic Cobordism,
but the problems appear when one tries to show that the natural map
$\Omega^m(X)\row\Omega_{\zz/p}^{pm}(X^{\times p})$ is well-defined.
The reason is that the defining relations in the Algebraic Cobordism
theory are more complicated than in the complex-oriented cobordism.
Namely, aside from the usual {\it elementary cobordism relations}
one has also the {\it double point relations} - see \cite{LP}.
It is rather easy to show that the elementary cobordism relations
are respected by our map, but the author was unable to do the
double point case. And although the author succeeded for $p=2$, he
had to employ the {\it Symmetric operations} modulo $2$.
Until now these operations were unavailable for $p>2$, and one
of the principal aims of the current article is to construct them.
So, we have to use a different approach. Fortunately, the methods
of \cite{SU} give us all the necessary tools.

\section{Continuous group action on a power series ring}
\label{cont}

To deal with the T.tom Dieck-style Steenrod operations we will need to
compute the invariants of the continuous action of a finite group on a
power series ring.

The following Lemma is the key to such a description.

\begin{lem} \label{del}
Let $R=\lim\limits_{\low}^{}R_n$ be a commutative ring, and $t_1,\ldots,t_r\in R$ be
such elements that $R$ is complete with respect to $t_i$
(that is, $t_i^n=0\in R_n$).
Suppose $t_i$ and $(t_i-t_j)$, for $i\neq j$ are not zero-divisors
in $R$. Let $k,n_1,\ldots,n_r\in\nn$, and
$f(x)=\sum_{l\geq k}\alpha_lx^l\in R[[x]]$ be such
power series that $f(t_i+y)=\sum_{j\geq n_i}\beta_{i,j}y^j$, for some $\beta_{i,j}\in R$.
Then $\alpha_k$ is divisible by $\prod_{i=1}^r t_i^{n_i}$.
\end{lem}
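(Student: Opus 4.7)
The proof will go by induction on $r$. The case $r=0$ is vacuous (the empty product equals $1$), so assume $r \geq 1$. The hypothesis at $t_1$ says $f(t_1+y) = y^{n_1} \sum_{j \geq 0} \beta_{1,j+n_1}\, y^j$; after substituting $y \mapsto x - t_1$ this rewrites as a factorization
$$f(x) = (x-t_1)^{n_1}\, G_1(x), \qquad G_1(x) := \sum_{j \geq 0} \beta_{1,j+n_1}(x-t_1)^j.$$
Here $G_1$ really defines an element of $R[[x]]$, because completeness of $R$ in $t_1$ guarantees that each coefficient $\sum_{j \geq 0} \beta_{1,j+n_1}\binom{j}{i}(-t_1)^{j-i}$ of $x^i$ in $G_1$ converges in $R$.

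Two further properties of $G_1$ then drive the induction. First, since $(x-t_1)^{n_1}$ has constant term $(-t_1)^{n_1}$ and $t_1$ is not a zero-divisor, successively comparing coefficients of $x^0, x^1, \ldots, x^{k-1}$ in $f = (x-t_1)^{n_1} G_1$ (using $\alpha_l = 0$ for $l < k$) shows that $G_1$ also vanishes to order $k$ in $x$; one then reads off $\alpha_k = (-1)^{n_1} t_1^{n_1}\, \gamma_k$, where $\gamma_k$ is the coefficient of $x^k$ in $G_1$. Second, substituting $x = t_i + y$ for $i \geq 2$ turns the factorization into $f(t_i+y) = (t_i - t_1 + y)^{n_1}\,G_1(t_i+y)$; since $(t_i - t_1 + y)^{n_1}$ has constant term $(t_i-t_1)^{n_1}$, which is a non-zero-divisor by hypothesis, and the left-hand side is divisible by $y^{n_i}$, cancelling coefficient-by-coefficient in $R[[y]]$ shows that $y^{n_i}$ divides $G_1(t_i + y)$.

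Thus $G_1$ satisfies the hypotheses of the lemma for the reduced tuple $(t_2, \ldots, t_r)$ with the same lowest degree $k$, and the inductive hypothesis applied to $G_1$ yields $\prod_{i=2}^r t_i^{n_i} \mid \gamma_k$; combined with $\alpha_k = (-1)^{n_1} t_1^{n_1} \gamma_k$, this gives the desired divisibility. The only step requiring care — rather than a genuine obstacle — is the systematic use of the non-zero-divisor hypotheses to transfer a vanishing order from a product $P(x) G(x)$ onto the factor $G(x)$ whenever $P$ has a non-zero-divisor constant term; the two clauses "$t_i$ not a zero-divisor" and "$t_i - t_j$ not a zero-divisor" are exactly what is consumed at the two places above, and both are inherited when passing to the sub-tuple $(t_2, \ldots, t_r)$, so the induction closes cleanly.
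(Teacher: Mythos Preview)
Your argument is correct and is genuinely different from the paper's. You factor $f(x)=(x-t_1)^{n_1}G_1(x)$, peel off the factor $t_1^{n_1}$ from the leading coefficient, verify that $G_1$ inherits the vanishing hypotheses at $t_2,\ldots,t_r$, and recurse on $r$. The only tools used are the triangular cancellation ``if $P(0)$ is a non-zero-divisor and $P\cdot G$ has order $\geq m$, then $G$ has order $\geq m$'' together with completeness in $t_1$ to make $G_1\in R[[x]]$ well-defined; both are exactly right and you have identified correctly which non-zero-divisor hypothesis is consumed at each step.

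The paper instead writes the conditions $\beta_{i,j}=0$ for $j<n_i$ as an infinite linear system in the $\alpha_l$'s with coefficient matrix $A(n_1,\ldots,n_r;\infty)$, computes by a separate induction that the leading $N\times N$ block has determinant $\prod_{i>j}(t_i-t_j)^{n_in_j}$ and that all $N\times N$ minors are divisible by this, and then uses Cramer's rule to solve for $\alpha_k$ as an explicit convergent combination $\alpha_k=\sum_{l\geq k+N}c_l\alpha_l$ with each $c_l$ divisible by $\prod_i t_i^{n_i}$. Your approach is shorter and more conceptual for the divisibility statement as written; the paper's approach yields the extra information of an explicit expression of $\alpha_k$ in terms of the higher coefficients, and makes the Vandermonde-type structure visible, but for the applications in the paper (Theorem~\ref{G} and its corollaries) only the divisibility is used, so nothing is lost.
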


\begin{proof}
Let $(n_1,\ldots,n_r)$ be an $r$-tuple of natural numbers, and $m\geq
N:=\sum_{j=1}^rn_j$. Denote $N_i:=\sum_{j=1}^in_j$.
For $1\leq u\leq N$, denote as $|u|$ such $1\leq i\leq r$ that
$N_{i-1}<u\leq N_i$, and as
$\ov{u}$ the difference $u-N_{i-1}-1$ (so, $0\leq\ov{u}<n_{|u|}$).
Consider $N\times m$ matrix
$A(n_1,\ldots,n_r;m)=\{a_{u,v}\}$, where
$a_{u,v}=\binom{v-1}{\ov{u}}t_{|u|}^{v-1-\ov{u}}$.

\begin{claim} \label{minors}
\begin{itemize}
\item[$(1)$\ ]
All $N\times N$ minors of the matrix $A(n_1,\ldots,n_r;m)$ are divisible
by\\
$\prod_{i>j}(t_i-t_j)^{n_i\cdot n_j}$.
\item[$(2)$\ ]
If $m=N$, then
$\op{det}(A(n_1,\ldots,n_r;m))=\prod_{i>j}(t_i-t_j)^{n_i\cdot n_j}$.
\end{itemize}
\end{claim}

\begin{proof}
Let us prove both statements by induction on $N$.
For $N=0$ there is nothing to prove.
Notice, that elementary transformations on rows and
columns do not change the ideal generated by minors, while multiplication of
some row by $\lambda$ multiplies this ideal by $\lambda$.
Perform the following transformations (in the prescribed order):

1) For all $m>v\geq 1$ subtract $v$-th column times $t_1$ from $(v+1)$-st.

2) For all $2\leq i\leq r$ subtract $1$-st row from $(N_{i-1}+1)$-st, and
   divide the result (the new $(N_{i-1}+1)$-st row) by $(t_i-t_1)$.

3) For all $2\leq i\leq r$, for all $N_{i-1}+1\leq u<N_i$, subtract
   $u$-th row from $(u+1)$-st, and divide the result (the new $(u+1)$-st row)
   by $(t_i-t_1)$.

The result will be the block matrix $1\times A(n_1-1,n_2,\ldots,n_r;m-1)$.
Thus, the ideal generated by the minors of $A(n_1,\ldots,n_r;m)$ is
$\prod_{i=2}^r(t_i-t_1)^{n_i}$ times the ideal generated by the minors
of $A(n_1-1,n_2,\ldots,n_r;m-1)$. Induction step is proven.
\Qed
\end{proof}

Since for $f(x)=\sum_{l\geq k}\alpha_l x^l$, we have
$f(t_i+y)=\sum_{j\geq 0}\beta_{i,j}y^j$ with $\beta_{i,j}=0$, for
$j<n_i$, we get $n_i$ equations:
$$
\sum_{l\geq k}\binom{l}{w}t_i^{l-w}\alpha_l=0,\,\,\,0\leq w<n_i.
$$
Performing elementary transformations with rows and dividing by $t_i$
(which is possible since $t_i$ is not a zero divisor), we get the
equivalent system:
$$
\sum_{l\geq k}\binom{l-k}{w}t_i^{l-k-w}\alpha_l=0,\,\,\,0\leq w<n_i.
$$
Combining all such systems for $1\leq i\leq r$, we get the system with
the matrix $A(n_1,\ldots,n_r;\infty)$.
Let $B$ be such a matrix that
$$
B\cdot A(n_1,\ldots,n_r;N)=\op{det}(A(n_1,\ldots,n_r;N))\cdot Id=
\prod_{i>j}(t_i-t_j)^{n_i\cdot n_j}\cdot Id.
$$
Since $(t_i-t_j)$ are not zero divisors,
the system with the matrix $A(n_1,\ldots,n_r;\infty)$ is
equivalent to the system with the matrix $B\cdot A(n_1,\ldots,n_r;\infty)$.
This shows that $\prod_{i>j}(t_i-t_j)^{n_i\cdot n_j}\cdot\alpha_k$ can be
expressed as a linear combination of $\alpha_l$, with $l\geq k+N$.
And, by the Kramer's rule, the coefficient at $\alpha_{k+m}$ will be (minus) the minor
$M_{2,3,\ldots,N,m+1}$ of the matrix $A(n_1,\ldots,n_r;\infty)$.
Dividing by $t_i$ and performing elementary transformations with
rows we see that this minor is equal to $\prod_{i=1}^rt_i^{n_i}$ times
the minor $M_{1,2,\ldots,N-1,m}$. It follows from the Claim \ref{minors}
that $M_{1,2,\ldots,N-1,m}$ is divisible by
$\prod_{i>j}(t_i-t_j)^{n_i\cdot n_j}$. Using again the fact that $(t_i-t_j)$
is not a zero divisor, we express $\alpha_k$ as a linear combination of
$\alpha_l$, $l\geq k+N$, where all the coefficients are divisible by
$\prod_{i=1}^rt_i^{n_i}$.
\Qed
\end{proof}

\begin{rem}
Above we are working with infinite linear relations, but it follows from our condition
on $t_i$'s that the respective sums do converge.
\end{rem}

\begin{defi}
\label{conthom}
Let $B=\lim\limits_{\low}^{}B_n$ be a commutative ring.
We say that $\sigma:B[[x]]\row B[[x]]$ is a {\it continuous
$B$-homomorphism}, if, for any $\eps(x)\in B[[x]]$, one has $\sigma(\eps(x))=\eps(x^{\sigma})$, where
$x^{\sigma}=\sum_{j\geq 0}\lambda^{\sigma}_jx^j$, and
$B$ is complete with respect
to $\lambda^{\sigma}_0$ (that is,
$(\lambda^{\sigma}_0)^n=0\in B_n$).
If $\lambda^{\sigma}_1\in B$ is invertible, then such map has
an inverse (also {\it continuous $B$-homomorphism}), and
we call it {\it continuous $B$-automorphism}.
Continuous $B$-automorphisms form a group $\op{Aut}^c_B(B[[x]])$ under composition.
\end{defi}

\begin{defi}
\label{contgract}
Let $G$ be a group. We say that $G$ {\it acts continuously}
on $B[[x]]$ if we are given a group homomorphism
$\rho:G\row\op{Aut}^c_B(B[[x]])$.
\end{defi}

\begin{thm}\label{G}
Let $\rho:G\row\op{Aut}^c_B(B[[x]])$ be a continuous action of a
finite group $G$ on $B[[x]]$. Suppose that for all $g\in G\backslash e$, the
elements $\lambda^g_0\in B$ are not zero divisors. Then
the subring of invariants is given by:
$$
B[[x]]^{G}=B[[\prod_{g\in G}x^g]].
$$
\end{thm}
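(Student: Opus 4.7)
The plan is to establish both inclusions for $P(x) := \prod_{g \in G} x^g$. The inclusion $B[[P(x)]] \subseteq B[[x]]^G$ is immediate, as $G$ permutes the factors of $P(x)$. For the reverse, given $f = \sum_{l \geq 0}\alpha_l x^l \in B[[x]]^G$, I will recursively construct coefficients $c_k \in B$ so that $f = \sum_{k \geq 0} c_k P(x)^k$, with convergence in the $x$-adic topology of $B[[x]]$. The key observation enabling this is that $P(x)$ has $x$-valuation $1$ and $x$-leading coefficient $\pi := \prod_{g \neq e} \lambda^g_0$, a non-zero-divisor (as a product of non-zero-divisors), so $P(x)^k$ has leading term $\pi^k x^k$. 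Setting $c_0 := f(0)$ and iteratively $c_k := \alpha_k/\pi^k$ after subtracting lower-order contributions will give the desired expansion, provided $\pi^k \mid \alpha_k$ at every stage.

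The heart of the argument is therefore the divisibility claim: for $f \in B[[x]]^G$ with $x$-valuation $\geq k$ and leading coefficient $\alpha_k$, the element $\pi^k$ divides $\alpha_k$. I plan to deduce this from Lemma \ref{del}, taking $t_g := \lambda^g_0$ and uniform vanishing orders $n_g := k$ over $g \in G \setminus \{e\}$. The analytic input for the lemma --- that $f(\lambda^g_0 + y) \in B[[y]]$ has $y$-valuation $\geq k$ for each non-identity $g$ --- will follow from $G$-invariance: writing $x^g = \lambda^g_0 + u_g(x)$ where $u_g(x) = \lambda^g_1 x + O(x^2)$ has invertible leading coefficient, $u_g$ admits a compositional inverse $u_g^{-1}(y)$ of $y$-valuation $1$, and invariance yields $f(\lambda^g_0 + y) = f(x^g)|_{x = u_g^{-1}(y)} = f(u_g^{-1}(y))$, which has $y$-valuation $\geq k$ since $f$ does. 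Lemma \ref{del} then delivers $\alpha_k$ divisible by $\prod_{g \neq e}(\lambda^g_0)^k = \pi^k$, completing the inductive step.

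The main obstacle will be verifying the remaining hypothesis of Lemma \ref{del}, namely that $\lambda^g_0 - \lambda^h_0$ is not a zero divisor in $B$ for distinct non-identity $g, h$ --- a condition not explicitly listed among the theorem's assumptions. I plan to extract it from the group structure: applying $\rho(h^{-1})$ to $x^g - x^h$ and evaluating at $x = 0$ produces a relation of the form $\lambda^g_0 - \lambda^h_0 = \lambda^{h^{-1}g}_0 + \lambda^h_0 \cdot \mu_{g,h}$ with $\mu_{g,h} \in B$. Since $h^{-1}g \neq e$, the element $\lambda^{h^{-1}g}_0$ is a non-zero-divisor, and combining this with the completeness of $B$ with respect to $\lambda^h_0$ should allow a bootstrap argument, lifting any annihilator of $\lambda^g_0 - \lambda^h_0$ into successive powers of $\lambda^h_0$ and then invoking the fact that $(\lambda^h_0)^n = 0$ in $B_n$ to conclude the annihilator vanishes in each quotient $B_n$, hence in $B = \varprojlim B_n$.
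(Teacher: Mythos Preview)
Your overall strategy coincides with the paper's: reduce the inductive step to Lemma~\ref{del} by showing that an invariant $f$ of $x$-valuation $\geq k$ satisfies $f(t_g+y)\in(y)^k$ for each $g\neq e$, deduce $\pi^k\mid\alpha_k$, subtract $c_kP(x)^k$, and iterate. Your treatment of the analytic input (via the compositional inverse $u_g^{-1}$) is equivalent to the paper's observation that $(x)=(y_g)$.

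The gap is in your verification that $t_g-t_h$ is a non-zero-divisor. The relation you extract, $t_g-t_h=t_{h^{-1}g}+t_{h^{\pm 1}}\cdot\mu$, is not strong enough for a bootstrap: knowing that $r$ is a non-zero-divisor and that $B$ is $s$-complete does \emph{not} force $r+s\mu$ to be a non-zero-divisor (take $B=k[[t]]$, $r=s=t$, $\mu=-1$). The obstruction is that ``non-zero-divisor'' need not survive passage to the quotients $B_n$, so from $a\,t_{h^{-1}g}=-a\,t_{h^{\pm 1}}\mu$ you cannot conclude that the image of $a$ in $B_n$ vanishes.

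The paper avoids this by using the group law in the other direction. Writing $x^g$ as the power series $x^h$ evaluated at $x^k$ (for the appropriate $k\in\{gh^{-1},h^{-1}g\}$, depending on convention) and setting $x=0$ gives
\[
t_g=\sum_{j\geq 0}\lambda^h_j\,t_k^{\,j}=t_h+\lambda^h_1 t_k+\lambda^h_2 t_k^{\,2}+\cdots,
\]
so that $t_g-t_h=t_k\cdot(\lambda^h_1+\lambda^h_2 t_k+\cdots)$. Since $\lambda^h_1$ is a unit and $B$ is $t_k$-complete, the second factor is a unit, and $t_g-t_h$ is a unit multiple of the non-zero-divisor $t_k$. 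Replace your bootstrap by this one-line factorisation and the rest of your plan goes through exactly as in the paper.
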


\begin{proof}
Let $\ffi(x)\in B[[x]]^G$ be invariant power series.
Let $\ffi(x)=\sum_{j\geq n}\alpha_jx^j$. Consider the power series
$x^g\in B[[x]],\,g\in G$. Let us denote $t_g:=\lambda^g_0$.

Let $g,h\in G$ be different elements.
Then $x^g=(x^h)^{h^{-1}g}$.
Thus, $t_g=t_h+\sum_{j\geq 1}\lambda^{h}_jt_{h^{-1}g}^j$, and, up to
an invertible factor, $(t_g-t_h)$ is equal to $t_{h^{-1}g}$, which is
not a zero divisor.

Since $\ffi(x)=\ffi(x^g)=\ffi(t_g+y_g)$, where
$y_g=\sum_{j\geq 1}\lambda^g_jx^j$, and the ideal $(x)$ generated by $x$
coincides with the ideal $(y_g)$ generated by $y_g$ (since $\lambda^g_1$ is
invertible), we have that $\ffi(t_g+y_g)=\sum_{j\geq n}\beta_{g,j}y_g^j$.
It follows from Lemma \ref{del} that $\alpha_n$ is divisible by
$\prod_{g\in G\backslash e} t_g^n$. Let
$\gamma_n\cdot \prod_{g\in G\backslash e} t_g^n =\alpha_n$.
Then $\psi(x):=\ffi(x)-\gamma_n\cdot(\prod_{g\in G}x^g)^n$ is also
invariant, and belongs to $(x)^{n+1}$.

Thus, any power series invariant under $G$ can be expressed as a power
series in $(\prod_{g\in G}x^g)$. Theorem is proven.
\Qed
\end{proof}

\begin{cor} \label{z/p}
Let $B$ be commutative ring, with a continuous action of
$\zz/p\cdot\la\sigma\ra$ ($p$-prime)
on $B[[x]]$. Suppose that $t_{\sigma}\in B$ is not a zero divisor.
Then
$$
B[[x]]^{\zz/p}=B[[\prod_{i=0}^{p-1}x^{\sigma^i}]].
$$
\end{cor}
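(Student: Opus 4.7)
The plan is to deduce the corollary as a direct application of Theorem \ref{G} to $G=\zz/p=\la\sigma\ra$. Theorem \ref{G} demands that $t_g=\lambda^g_0$ be a non-zero-divisor for \emph{every} non-identity $g\in G$, i.e., for each of $\sigma,\sigma^2,\ldots,\sigma^{p-1}$. The hypothesis of the corollary only furnishes this for $g=\sigma$, so the content of the proof is to bootstrap from this single non-zero-divisor hypothesis to the stronger one required by the theorem; once that is done, the description $B[[x]]^{\zz/p}=B[[\prod_{i=0}^{p-1}x^{\sigma^i}]]$ follows with no further work.

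For the bootstrap, fix $i\in\{1,\ldots,p-1\}$. Since $p$ is prime, $\gcd(i,p)=1$, so $\sigma^i$ is again a generator of $\zz/p$: choose $j\in\{1,\ldots,p-1\}$ with $ij\equiv 1\pmod p$, so that $(\sigma^i)^j=\sigma$. Inside the proof of Theorem \ref{G} it is established that for any two distinct $g,h\in G$ one has
$$
t_g-t_h=t_{h^{-1}g}\cdot\bigl(\lambda^h_1+\lambda^h_2 t_{h^{-1}g}+\cdots\bigr),
$$
and the second factor is a unit of $B$, because $\lambda^h_1$ is invertible and $t_{h^{-1}g}$ is topologically nilpotent. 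Apply this with $(g,h)=(\sigma^{i(k+1)},\sigma^{ik})$ for $k=0,\ldots,j-1$; each time $h^{-1}g=\sigma^i$, so we get $t_{\sigma^{i(k+1)}}-t_{\sigma^{ik}}=u_k\cdot t_{\sigma^i}$ with $u_k\in B^\times$. Telescoping and using $t_e=0$,
$$
t_\sigma=t_{\sigma^{ij}}=\Bigl(\sum_{k=0}^{j-1}u_k\Bigr)\cdot t_{\sigma^i}=:c\cdot t_{\sigma^i}.
$$

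Now if $r\in B$ satisfies $r\cdot t_{\sigma^i}=0$, then $r\cdot t_\sigma=rc\cdot t_{\sigma^i}=0$, and the assumed non-zero-divisor property of $t_\sigma$ forces $r=0$. Hence $t_{\sigma^i}$ is a non-zero-divisor for every $i=1,\ldots,p-1$, the full hypothesis of Theorem \ref{G} is met, and the corollary follows. The main (and only) obstacle is this reduction — one has to notice that the primality of $p$ lets one express $t_\sigma$ as a $B$-multiple of every $t_{\sigma^i}$ via the relation proved inside Theorem \ref{G}, which is precisely the leverage needed to transfer the non-zero-divisor property.
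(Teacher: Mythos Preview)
Your proof is correct and follows essentially the same approach as the paper. The paper's argument is terser --- it simply asserts that $t_{g^m}$ is divisible by $t_g$ and then, using that every non-identity $g\in\zz/p$ has some power equal to $\sigma$, concludes that $t_g$ divides $t_\sigma$ and is therefore a non-zero-divisor; you supply the missing detail for that divisibility by telescoping the relation $t_g - t_h = (\text{unit})\cdot t_{h^{-1}g}$ extracted from the proof of Theorem~\ref{G}.
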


\begin{proof}
It is sufficient to observe that $t_{g^m}$ is divisible by $t_g$.
Since, for arbitrary non-zero element $g\in\zz/p\cdot\la\sigma\ra$, there is $m$
such that $g^m=\sigma$, we have that $t_{g}$ is not a zero divisor,
and we can apply Theorem \ref{G}.
\Qed
\end{proof}

\section{T.tom Dieck-style Steenrod operations}
\label{TtD}

Let $R$ be commutative ring with the formal group law on it (or, which is
the same, with the ring homomorphism $\eps:\laz\row R$), and $p$ be
prime number.
Define $B:=R[[t]]/(\frac{p\cdot_F t}{t})$, where
$p\cdot_F t\in R[[t]]$ is $p$ times $t$ in the sense of the formal
group law. Then $B$ is complete with respect to $t$.

Let us define the continuous action of $\zz/p\cdot\la\sigma\ra$ on $B[[x]]$
by the formula: $x^{\sigma}=(x+_F t)\in B[[x]]$.
Notice, that $t_{\sigma}=t$, and
$\lambda^{\sigma}_1=1+\sum_{i\geq 1}a_{i,1}t^i$ is invertible.
Suppose $p$ is not a zero divisor in $R$. Then $t$ is not a zero divisor
in $B$, and we can apply Corollary \ref{z/p}.
We get:

\begin{cor} \label{omegaz/p}
In the above situation,
$$
B[[x]]^{\zz/p}=B[[\prod_{i=0}^{p-1}(x+_F i\cdot_F t)]].
$$
\end{cor}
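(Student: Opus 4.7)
The plan is to reduce this to a direct application of Corollary \ref{z/p}, so the work is just in verifying the hypotheses of that corollary in the present setting and then identifying the product $\prod_{g\in\zz/p}x^g$ with the explicit expression $\prod_{i=0}^{p-1}(x+_F i\cdot_F t)$.

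First, I would check that the formula $x^\sigma=x+_F t$ really defines a continuous action of $\zz/p$. Continuity and the fact that $\sigma$ is an automorphism are already noted: $\lambda^\sigma_0=t$, which makes $B$ complete with respect to it, and $\lambda^\sigma_1=1+\sum_{i\geq 1}a_{i,1}t^i$ is a unit in $B$. To see that $\sigma^p=\op{id}$, iterate: $x^{\sigma^i}=x+_F(i\cdot_F t)$, which is immediate by induction since $\sigma$ fixes the coefficient ring $B$. Then $x^{\sigma^p}=x+_F (p\cdot_F t)$, and $p\cdot_F t=t\cdot\frac{p\cdot_F t}{t}=0$ in $B$ by construction, so $\sigma^p=\op{id}$.

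Next I would verify that $t_\sigma=t$ is not a zero divisor in $B$, which is the only substantive check. Write $h(t):=\frac{p\cdot_F t}{t}=p+c_1 t+c_2 t^2+\ldots\in R[[t]]$. If $t\cdot f(t)=h(t)\cdot g(t)$ in $R[[t]]$, then looking at the constant term gives $0=p\cdot g(0)$, so by the assumption that $p$ is not a zero divisor in $R$ we have $g(0)=0$, i.e.\ $g(t)=t\cdot g_1(t)$. Cancelling $t$ (which is not a zero divisor in $R[[t]]$) gives $f=h\cdot g_1$, so $f\equiv 0$ in $B$. Hence $t$ is not a zero divisor in $B$.

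With these checks in hand, Corollary \ref{z/p} applies directly and yields
\[
B[[x]]^{\zz/p}=B\Bigl[\Bigl[\prod_{i=0}^{p-1}x^{\sigma^i}\Bigr]\Bigr]=B\Bigl[\Bigl[\prod_{i=0}^{p-1}(x+_F i\cdot_F t)\Bigr]\Bigr],
\]
using the identification $x^{\sigma^i}=x+_F i\cdot_F t$ from the first step. The only potentially tricky point is the non-zero-divisor verification for $t$ in $B$, but this is a short computation using that $p$ is not a zero divisor in $R$; everything else is bookkeeping.
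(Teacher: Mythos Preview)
Your proposal is correct and follows the same approach as the paper: the paper simply records that $t_\sigma=t$, that $\lambda^\sigma_1$ is invertible, and that (under the standing assumption that $p$ is not a zero divisor in $R$) $t$ is not a zero divisor in $B$, and then applies Corollary~\ref{z/p} directly. You have merely spelled out the verifications (that $\sigma^p=\op{id}$ via $p\cdot_F t=0$ in $B$, and the non-zero-divisor computation for $t$) that the paper leaves as one-line assertions.
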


\begin{prop} \label{x+y}
Let $R$ be commutative ring with the formal group law, and
$B:=R[[t]]/(\frac{p\cdot_F t}{t})$, then there exists
power series in two variables $G(u,v)$ with coefficients
in $B$ such that
$$
\prod_{i=0}^{p-1}(x+_Fy+_F i\cdot_F t)=G(\prod_{i=0}^{p-1}(x+_F i\cdot_F t),
\prod_{i=0}^{p-1}(y+_F i\cdot_F t)).
$$
\end{prop}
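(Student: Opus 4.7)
The plan is to apply Corollary \ref{omegaz/p} twice, once in each of the variables $x$ and $y$. By the universality of $\laz$ together with the compatibility of the construction with morphisms of formal group laws $\laz \to R$, it suffices to treat the universal case $R = \laz$; the advantage is that $p$ is then not a zero-divisor in $R$, so (as observed just before Corollary \ref{omegaz/p}) $t$ is not a zero-divisor in $B = \laz[[t]]/(\frac{p\cdot_F t}{t})$, making the hypotheses of that corollary available. Set
$$P(x) := \prod_{i=0}^{p-1}(x +_F i \cdot_F t), \qquad Q(y) := \prod_{i=0}^{p-1}(y +_F i \cdot_F t), \qquad H(x,y) := \prod_{i=0}^{p-1}(x +_F y +_F i \cdot_F t).$$

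The key observation is that in $B$ we have $p\cdot_F t = 0$, so replacing $x$ by $x +_F t$ in any of these products merely cycles the factors $i \cdot_F t$ through the residues mod $p$; hence $P(x +_F t) = P(x)$ and $H(x +_F t, y) = H(x,y) = H(x, y +_F t)$. Viewing $H(x,y)$ as a power series in $x$ with coefficients in $B' := B[[y]]$, one checks that $B'$ is complete with respect to $t$ and that $t$ remains a non-zero-divisor in $B'$; the assignment $x \mapsto x +_F t$ thus defines a continuous $\zz/p$-action on $B'[[x]]$ for which $H(x,y)$ is invariant. Corollary \ref{omegaz/p} then produces an expansion
$$H(x,y) = \sum_{k \geq 0} h_k(y)\, P(x)^k, \qquad h_k(y) \in B[[y]].$$
Since $H$ is invariant under $y \mapsto y +_F t$ while $P(x)$ does not involve $y$, each $h_k(y)$ is itself invariant under that shift, and a second application of Corollary \ref{omegaz/p} (now with base $B$, acting on $B[[y]]$) gives $h_k(y) = \sum_{j \geq 0} g_{k,j}\, Q(y)^j$ with $g_{k,j} \in B$. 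Setting $G(u,v) := \sum_{k,j \geq 0} g_{k,j}\, u^k v^j$ then yields $H(x,y) = G(P(x), Q(y))$, as desired.

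Convergence of the final double sum is automatic: the $i=0$ factor of $P(x)$ is precisely $x$, so $P(x) \in (x) \cdot B[[x]]$, and symmetrically $Q(y) \in (y) \cdot B[[y]]$; consequently $P(x)^k Q(y)^j$ lies in $(x^k, y^j)$, so for each fixed pair $(a,b)$ only finitely many $(k,j)$ contribute to the coefficient of $x^a y^b$, and $G(u,v)$ is a legitimate element of $B[[u,v]]$. The essentially only obstacle is verifying the non-zero-divisor hypotheses of Corollary \ref{omegaz/p} at both stages --- which is exactly what the reduction to the Lazard ring secures --- after which everything is formal bookkeeping.
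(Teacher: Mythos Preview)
Your proof is correct and follows essentially the same approach as the paper: reduce to the universal case $R=\laz$ so that $t$ is a non-zero-divisor in $B$, observe that $\prod_{i=0}^{p-1}(x+_F y+_F i\cdot_F t)$ is invariant under the $\zz/p\times\zz/p$-action shifting $x$ and $y$ by $t$, and apply Corollary~\ref{omegaz/p} once in each variable. The paper phrases the two applications as computing $B[[x,y]]^{\zz/p\times\zz/p}$ in one stroke, while you extract the coefficients $h_k(y)$ explicitly and argue their invariance, but the content is identical.
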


\begin{proof}
Clearly, it is sufficient to prove this statement for $R=\laz$ with
the universal formal group law on it. In this case, $p$ is not a zero divisor
in $R$, and so $t$ is not a zero-divisor in $B$.
Consider the action of $\zz/p\times\zz/p$ on $B[[x,y]]$ given by
$x^{\sigma}=(x+_F t)$, $y^{\sigma}=y$, $x^{\tau}=x$, $y^{\tau}=(y+_F t)$.
Clearly, $\prod_{i=0}^{p-1}(x+_Fy+_F i\cdot_F t)\in B[[x,y]]^{\zz/p\times\zz/p}$.
Applying Corollary \ref{omegaz/p}, we obtain:
$B[[x,y]]^{\zz/p\times 1}=B[[\prod_{i=0}^{p-1}(x+_F i\cdot_F t),y]]=:C[[y]]$,
and $t$ is not a zero divisor in $C$ either. Thus,
$$
B[[x,y]]^{\zz/p\times\zz/p}=B[[\prod_{i=0}^{p-1}(x+_F i\cdot_F t),
\prod_{i=0}^{p-1}(y+_F i\cdot_F t)]].
$$
Proposition is proven.
\Qed
\end{proof}

In the above situation (with $t\in B$ not a zero-divisor), consider the power series
$\alpha(x)=x\prod_{i=1}^{p-1}(x+_F i\cdot_F t)\in B[[x]]$.
The first term of this power series is $(\prod_{i=1}^{p-1} i\cdot_F t)\cdot x$.
Notice, that $i\cdot_F t$, for
$i=1,\ldots,p-1$, are invertible in $B[t^{-1}]$. Thus, there exists the inverse
power series $\beta(y)\in B[t^{-1}][[y]]$ such that $\beta(\alpha(x))=x$.

Consider the twisted formal group law
$F^{\alpha}$ given by
$$
F^{\alpha}(u,v):=\alpha(F(\beta(u),\beta(v)))\in
B[t^{-1}][[u,v]]
$$

\begin{prop}
The formal group law $F^{\alpha}$ has coefficients in $B$.
\end{prop}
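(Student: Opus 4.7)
The plan is to observe that $\alpha(x)$ is exactly the generator of the $\zz/p$-invariants isolated in Corollary \ref{omegaz/p} and then to reduce the statement to a direct application of Proposition \ref{x+y}. Indeed, because $0\cdot_F t = 0$ in any formal group law, one has
$$
\alpha(x)=x\cdot\prod_{i=1}^{p-1}(x+_F i\cdot_F t)=\prod_{i=0}^{p-1}(x+_F i\cdot_F t).
$$
This identification is the bridge between the definition of $F^{\alpha}$ and the invariant-theoretic computations of the previous section.

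Next, I would apply Proposition \ref{x+y} to obtain a power series $G(u,v)\in B[[u,v]]$ satisfying
$$
\prod_{i=0}^{p-1}(x+_F y+_F i\cdot_F t)=G\Bigl(\prod_{i=0}^{p-1}(x+_F i\cdot_F t),\,\prod_{i=0}^{p-1}(y+_F i\cdot_F t)\Bigr).
$$
Using the identification above, the left-hand side equals $\alpha(F(x,y))$ and the two arguments on the right are $\alpha(x)$ and $\alpha(y)$. Thus $\alpha(F(x,y))=G(\alpha(x),\alpha(y))$ as an identity in $B[[x,y]]$.

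Finally, I would substitute $x=\beta(u)$ and $y=\beta(v)$, working in $B[t^{-1}][[u,v]]$. Since $\beta$ is by construction the compositional inverse of $\alpha$, we have $\alpha(\beta(u))=u$ and $\alpha(\beta(v))=v$, so the identity becomes
$$
F^{\alpha}(u,v)=\alpha\bigl(F(\beta(u),\beta(v))\bigr)=G(u,v).
$$
Since $G\in B[[u,v]]$, this shows $F^{\alpha}$ has coefficients in $B$, as required.

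I do not expect any serious obstacle: all the real content has been packaged into Proposition \ref{x+y} (itself built on Corollary \ref{omegaz/p} and the invariant computation of Theorem \ref{G}). The only point needing a brief mention is that the substitution $x=\beta(u)$, $y=\beta(v)$ is legitimate a priori in $B[t^{-1}][[u,v]]$, and the identity $\alpha(F(x,y))=G(\alpha(x),\alpha(y))$, being already valid in $B[[x,y]]$, remains valid after this substitution; the conclusion that $F^{\alpha}(u,v)=G(u,v)$ then transports integrality from $G$ to $F^{\alpha}$.
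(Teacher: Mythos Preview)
Your proposal is correct and is exactly the approach the paper takes: the paper's proof is the single line ``This follows immediately from Proposition \ref{x+y}'', and what you have written is precisely the unpacking of that sentence, namely that $\alpha(F(x,y))=G(\alpha(x),\alpha(y))$ in $B[[x,y]]$ and hence $F^{\alpha}=G$ after substituting $x=\beta(u)$, $y=\beta(v)$.
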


\begin{proof}
This follows immediately from Proposition \ref{x+y}.
\Qed
\end{proof}

Combining this with Theorem \ref{neobrB0} we obtain
Steenrod operations of T.tom Dieck style for $\Omega^*$:

\begin{thm}
\label{TtDieck}
For each prime $p$ there exists (unique) multiplicative operation
$$
Sq:\Omega^*\row\Omega^*[[t]]/\left(\frac{p\cdot_{\Omega}t}{t}\right)
$$
with $\gamma_{Sq}(x)=x\cdot\prod_{0<i<p}(x+_{\Omega}i\cdot_{\Omega}t)$.
\end{thm}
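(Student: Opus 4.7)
The plan is to deduce the theorem by a direct application of Theorem \ref{neobrB0} with target $B^{*}=\Omega^{*}\otimes_{\laz}B$, where $B=\laz[[t]]/(\tfrac{p\cdot_{F}t}{t})$ is equipped with the formal group law induced from the universal one, and with $\gamma(x)=x\prod_{i=1}^{p-1}(x+_{F}i\cdot_{F}t)\in B[[x]]$. Theorem \ref{neobrB0} demands two hypotheses: that the leading coefficient $b_{0}=\prod_{i=1}^{p-1}i\cdot_{F}t\in B$ is not a zero-divisor, and that the twisted formal group law $F_{B}^{\gamma}$ (a priori an element of $B[t^{-1}][[u,v]]$) in fact has coefficients in $B$. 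Once both are in hand, the theorem produces the unique multiplicative $Sq$ with $\gamma_{Sq}=\gamma$.

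For the non-zero-divisor condition, I would first observe that $\laz\cong\zz[x_{1},x_{2},\ldots]$ is a polynomial ring, so $p$ is not a zero-divisor in $\laz$; then the standard fact (already used in Section \ref{TtD} just before Corollary \ref{omegaz/p}) that $t$ is not a zero-divisor in $B$ applies. Now each factor $i\cdot_{F}t$ equals $t\cdot(i+O(t))$, and modulo $t$ the quantity $i+O(t)$ reduces to $i\in\laz/p$, which is a unit since $1\leq i\leq p-1$. Because $B$ is $t$-adically complete, $i+O(t)$ lifts to a unit in $B$, hence $i\cdot_{F}t=t\cdot u_{i}$ for a unit $u_{i}$, and therefore $b_{0}=t^{p-1}\cdot\prod u_{i}$ is $t^{p-1}$ up to a unit. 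Since $t$ is not a zero-divisor, neither is $b_{0}$.

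For the coefficient condition on $F_{B}^{\gamma}$, I would simply cite the unlabeled Proposition that immediately precedes the theorem, which asserts exactly that $F^{\alpha}=F_{B}^{\gamma}$ has coefficients in $B$; this in turn rests on Proposition \ref{x+y} (and Corollary \ref{omegaz/p}/Theorem \ref{G}) identifying $\prod_{i=0}^{p-1}(x+_{F}y+_{F}i\cdot_{F}t)$ as a power series in the two invariants $\prod_{i=0}^{p-1}(x+_{F}i\cdot_{F}t)$ and $\prod_{i=0}^{p-1}(y+_{F}i\cdot_{F}t)$. The claimed $\gamma_{Sq}$ then recovers precisely this expression after factoring out $x$ and $y$ respectively.

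With both hypotheses established, Theorem \ref{neobrB0} produces a unique multiplicative operation $Sq:\Omega^{*}\to B^{*}$ with $\gamma_{Sq}=\gamma$, which is the statement of Theorem \ref{TtDieck}. There is no serious obstacle remaining at this stage: all the substantive work is absorbed into the invariant-theoretic Theorem \ref{G} and the structural Theorem \ref{neobrB0} from \cite{SU}, and the only new verification is the simple divisibility/unit computation for $b_{0}$ sketched above.
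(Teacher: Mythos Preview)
Your proposal is correct and follows precisely the paper's own argument: the paper derives Theorem \ref{TtDieck} by combining the unlabeled Proposition (that $F^{\alpha}$ has coefficients in $B$, which rests on Proposition \ref{x+y}) with Theorem \ref{neobrB0}. Your additional verification that $b_{0}=t^{p-1}\cdot(\text{unit})$ is a non-zero-divisor makes explicit what the paper leaves implicit in the surrounding setup.
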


\section{Symmetric operations for all primes}
\label{Sp}

\subsection{Construction}

By comparing the respective morphisms of formal group laws and using Theorem \ref{multFGL} we obtain a
commutative diagram relating Steenrod operations of D.Quillen and T.tom Dieck types:
$$
\xymatrix @-0.2pc{
\Omega^* \ar @{->}[r]^(0.4){St(\ov{i})} \ar @{->}[d]_(0.5){Sq}&
\Omega^*[\iis^{-1}][[t]][t^{-1}] \ar @{->}[d]^(0.5){}\\
\Omega^*[[t]]/(\frac{p\cdot_{\Omega}t}{t}) \ar @{->}[r]_(0.5){} &
\Omega^*[[t]][t^{-1}]/(p\cdot_{\Omega}t),
}
$$
where $\ov{i}$ is any choice of coset representatives.

Since the target of $Sq$ has no negative powers of $t$,
and the $t^0$-component of it coincides with the $p$-th power $\square^p$,
the commutativity of the above diagram shows that the {\it non-positive part}
of $(\square^p-St(\ov{i}))$ is divisible by $[p]_{\Omega}:=\frac{p\cdot_{\Omega}t}{t}$.
I should point out that this fact itself can be proven without Steenrod operations
of T.tom Dieck type, and without the Theorem \ref{multFGL} (or methods of \cite{SU}).
But what is much deeper,
it appears that one can divide "canonically", and the quotient is what we call
{\it Symmetric operation}.

\begin{thm}
\label{SOp}
There is unique operation $\Phi(\ov{i}):\Omega^*\row\Omega^*[\iis^{-1}][t^{-1}]$ such that
$$
(\square^p-St(\ov{i})-[p]_{\Omega}\cdot\Phi(\ov{i})):
\Omega^*\row\Omega^*[\iis^{-1}][[t]]t.
$$
\end{thm}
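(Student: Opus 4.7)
The plan is to apply Corollary \ref{div}(1) to the operation $G := \square^p - St(\ov{i}) : \Omega^* \row B^* := \Omega^*[\iis^{-1}][[t]][t^{-1}]$, with the additive decomposition $B^* = C^* \oplus D^*$ where $C^* := \Omega^*[\iis^{-1}][t^{-1}]$ (non-positive powers of $t$) and $D^* := \Omega^*[\iis^{-1}][[t]]t$ (strictly positive powers), and taking $b := [p]_\Omega$. The conclusion of the Corollary is then precisely the existence and uniqueness of the desired $\Phi(\ov{i})$, so it only remains to verify its two hypotheses.

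First I would check that $m_b : C^* \row C^*$ (multiplication by $[p]_\Omega$ followed by projection onto $C^*$) is injective on $(\pp^\infty)^{\times l}$. Since $[p]_\Omega = p + a_1 t + a_2 t^2 + \ldots$, for $c = \sum_{j=0}^n c_j t^{-j} \in C^*((\pp^\infty)^{\times l})$ the coefficient of $t^{-n}$ in $m_b(c)$ is $p \cdot c_n$. As $\laz = \zz[x_1, x_2, \ldots]$ is $\zz$-torsion-free, so is $\Omega^*((\pp^\infty)^{\times l}) = \laz[[x_1, \ldots, x_l]]$, and $p$ is not a zero divisor there; injectivity follows by induction on $n$.

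The main step is to verify that $\pi_C(G(a))$ lies in the image of $m_b$ for every $a \in \Omega^*(X)$. Here the key input is the T.tom Dieck style operation $Sq$ from Theorem \ref{TtDieck}, together with the comparison diagram above: for any lift $s(a) \in \Omega^*[[t]]$ of $Sq(a) \in \Omega^*[[t]]/([p]_\Omega)$, one has $St(\ov{i})(a) = s(a) + [p]_\Omega \cdot h$ for some $h \in B^*(X)$. Write $s(a) = s_0 + s_+$ with $s_0 \in \Omega^*(X)$ and $s_+ \in \Omega^*[[t]]t$. Because $\gamma_{Sq}|_{t=0} = x^p$, one has $s_0 \equiv \square^p(a) \pmod{p}$, so $\square^p(a) - s_0 = p u = m_b(u)$ for some $u \in \Omega^*(X)$. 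Splitting $h = h_C + h_D$ and using that $[p]_\Omega \cdot h_D \in D^*$ (hence contributes nothing to $\pi_C$), one computes
$$
\pi_C(G(a)) = (\square^p(a) - s_0) - m_b(h_C) = m_b(u - h_C),
$$
which is in the image of $m_b$, as required.

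The central obstacle, which is what has made the statement inaccessible for odd $p$ until now, is the existence of the witness operation $Sq$ whose reduction modulo $[p]_\Omega$ is supported in non-negative powers of $t$. For $p = 2$ this had to be constructed by an explicit geometric argument involving $\op{Hilb}_2$; for general $p$ it becomes available only through Theorem \ref{TtDieck}, whose proof in turn rests on Theorem \ref{neobrB0} together with the invariant computation of Theorem \ref{G}.
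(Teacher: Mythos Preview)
Your proof is correct and follows the same approach as the paper's: apply Corollary~\ref{div}(1) with $A^*=\Omega^*$, $B^*=\Omega^*[\iis^{-1}][[t]][t^{-1}]$, $C^*=\Omega^*[\iis^{-1}][t^{-1}]$, $D^*=\Omega^*[\iis^{-1}][[t]]t$, $b=[p]_{\Omega}$, and $G=\square^p-St(\ov{i})$. You spell out in more detail than the paper both the injectivity of $m_b$ and, especially, the image condition $\pi_C(G(a))\in\op{image}(m_b)$ via the comparison diagram with $Sq$; the paper records the latter in a single sentence (``By the above diagram, $\op{image}(\pi_C\circ G)\subset\op{image}(m_b)$'') after having observed just before the theorem that the $t^0$-component of $Sq$ is $\square^p$.

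Two small remarks. First, your justification ``$\gamma_{Sq}|_{t=0}=x^p$'' for $s_0\equiv\square^p(a)\pmod p$ is slightly elliptic: since $b_0=0$ here, one needs Theorem~\ref{multFGL} (not just $\gamma$) to identify the two multiplicative operations $\Omega^*\to\Omega^*/p$; but this is exactly what the paper invokes implicitly when it asserts the same fact. Second, Corollary~\ref{div}(1) only requires the image condition on $(\pp^{\infty})^{\times l}$, whereas you verify it for arbitrary $X$; this is harmless (indeed stronger), and your argument goes through unchanged when restricted to those spaces.
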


\begin{proof}
Consider $A^*=\Omega^*$, $B^*=\Omega^*[\iis^{-1}][[t]][t^{-1}]$,
$C^*=\Omega^*[\iis^{-1}][t^{-1}]$, and
$D^*=\Omega^*[\iis^{-1}][[t]]t$. Take
$b=[p]_{\Omega}$. Then
$m_b:\laz[\iis^{-1}][t^{-1}]\row \laz[\iis^{-1}][t^{-1}]$ is
injective as $\laz$ is an integral domain.
Consider $G=(\square^p-St(\ov{i})):A^*\row B^*$. Then
$\pi_C\circ G=St(\ov{i})_{\leq 0}:A^*\row B^*$ - the {\it non-positive part} of
$(\square^p-St(\ov{i}))$
corresponding to monomials in $t$ of non-positive degree.
By the above diagram, $image(\pi_C\circ G)\subset image(m_b)$, and by Corollary \ref{div},
there is unique operation $\Phi(\ov{i}):A^*\row C^*$ such that
$(\square^p-St(\ov{i})-b\cdot\Phi(\ov{i})):A^*\row D^*$.
\Qed
\end{proof}

Some traces of the $MU$-analogue of this operation were used by D.Quillen in \cite{Qu71},
and they provide the main tool of the mentioned article.

In Algebraic Cobordism the described operation appeared originally in the works \cite{so1}
and \cite{so2} of the author in the case $p=2$ in a different form. Namely, in the form of "slices",
which were constructed geometrically.
Only substantially later the author had realized that these slices can be combined
into the "formal half" of the "negative part" of some multiplicative operation, which
had a power series $\gamma=x\cdot(x-_{\Omega}t)$ reminiscent of a Steenrod operation
in Chow groups mod $2$.

Out of our operation $\Phi(\ov{i})$ we would like to produce some maps from $\Omega^*$ to itself.
The natural approach would be to consider the
coefficients of it at particular monomials $t^{-n}$, or, which is close,
$\operatornamewithlimits{Res}_{t=0}\frac{t^n\cdot\Phi(\ov{i})\omega_t}{t}$.
Here $\omega_t$ is the canonical invariant (w.r.to our FGL)
form $([\pp^0]+[\pp^1]t+[\pp^2]t^2+\ldots)dt$ - see
\cite[Sect. 7.1]{SU},
and $\operatornamewithlimits{Res}_{t=0}$ is the coefficient at $\frac{dt}{t}$.
And, if one thinks about it, there is no point restricting oneself to monomials, so
one can consider
$$
\Phi(\ov{i})^{q(t)}:=
\operatornamewithlimits{Res}_{t=0}\frac{q(t)\cdot\Phi(\ov{i})\omega_t}{t},
$$
where $q(t)=q_0+q_1t+q_2t^2+\ldots\in\laz[[t]]$ is any power series.
Of course, there are various relations among these slices which bind them together
into the operation $\Phi(\ov{i})$.
For $p=2$, these are exactly the Symmetric operations $\Phi^{q(t)}$ of \cite{so2}:

\begin{prop}
\label{SOoldnew}
In the case $p=2$, with $\ov{i}=\{-1\}$,
for any power series as above, we have:
$$
\Phi(\ov{i})^{q(t)}=\Phi^{q(t)}.
$$
\end{prop}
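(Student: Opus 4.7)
The plan is to invoke the uniqueness statement in Theorem \ref{SOp}. First, I would organize the classical symmetric operations $\Phi^{q(t)}$ of \cite{so2} into a single operation $\wt\Phi : \Omega^* \to \Omega^*[t^{-1}]$ by requiring that pairing against any $q(t) \in \laz[[t]]$ via $\operatornamewithlimits{Res}_{t=0} \frac{q(t)\cdot(-)\omega_t}{t}$ reproduces $\Phi^{q(t)}$. Since $\omega_t = ([\pp^0]+[\pp^1]t+\ldots)\,dt$ is the canonical invariant form of the FGL and the monomials $t^n$, $n\geq 0$, witness all coefficients of a Laurent polynomial in $t^{-1}$, such a $\wt\Phi$ is uniquely determined by the family $\{\Phi^{q(t)}\}_{q}$, provided the latter is consistent across $q$ (which is a feature of the construction of \cite{so2}).

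Next, I would verify that $\wt\Phi$ satisfies exactly the defining property of $\Phi(\{-1\})$ in Theorem \ref{SOp}, namely that
$$
\square^2 - St(\{-1\}) - [2]_\Omega \cdot \wt\Phi : \Omega^* \lrow \Omega^*[[t]]\, t.
$$
This is precisely the content of the main identity established in \cite{so2}: the $p=2$ symmetric operations were set up so that their total assembly accounts, after multiplication by $[2]_\Omega$, for the non-positive $t$-part of the difference between $\square^2$ and the Quillen-style operation with inverse Todd genus $\gamma(x)=x(x-_\Omega t)$. The uniqueness in Theorem \ref{SOp} then forces $\wt\Phi = \Phi(\{-1\})$, and slicing the equality by $\operatornamewithlimits{Res}_{t=0}\frac{q(t)\,(-)\omega_t}{t}$ yields the claimed $\Phi(\{-1\})^{q(t)}=\Phi^{q(t)}$.

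The main obstacle is bookkeeping of normalizations. The operations in \cite{so2} were originally produced by a concrete geometric construction using $\op{Hilb}_2$, and their link to $St(\{-1\})$ was formulated in terms somewhat different from the residue pairing used here. One must therefore carefully match: the invariant form $\omega_t$ as the correct duality datum between slices and power series in $t$; the convention $[2]_\Omega = \frac{2\cdot_\Omega t}{t}$ used to define the target of $Sq$; and the sign choice implicit in $\ov{i}=\{-1\}$ giving $\gamma(x)=x(x-_\Omega t)$. Once these identifications are in place, no further computation is required: the statement reduces to an application of uniqueness.
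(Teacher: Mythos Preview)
Your approach is correct and closely parallel to the paper's, but the two differ in how the uniqueness step is packaged. The paper does not assemble the old slices into a total $\wt\Phi$; instead it works slice-by-slice: it invokes Theorem~\ref{MAIN} to reduce to products of $\pp^{\infty}$, cites \cite[Propositions 2.13 and 2.15]{so2} to obtain the identity
\[
\Phi^{([2]_{\Omega})q(t)}=q(0)\cdot\square-\operatornamewithlimits{Res}_{t=0}\frac{q(t)\cdot St(\{-1\})\cdot\omega_t}{t},
\]
observes that the right-hand side is by definition $\Phi(\{-1\})^{([2]_{\Omega})q(t)}$, and then cancels the factor $[2]_{\Omega}$ using that $\Omega^*((\pp^{\infty})^{\times r})$ is $\laz$-free. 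Your route replaces this explicit cancellation by the uniqueness clause of Theorem~\ref{SOp}; since that uniqueness is itself proved via Corollary~\ref{div} and Theorem~\ref{MAIN}, the underlying mechanism is the same. What your version buys is a cleaner statement at the level of total operations; what the paper's version buys is that the input from \cite{so2} is pinned down precisely (it is stated there for slices against $[2]_{\Omega}q$, not as a property of a pre-assembled total operation), so the ``bookkeeping of normalizations'' you flag as the main obstacle is actually carried out rather than deferred.
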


\begin{proof}
By Theorem \ref{MAIN}, it is sufficient to compare our operations on cellular spaces
$(\pp^{\infty})^{\times r}$. It follows from \cite[Propositions 2.13 and 2.15]{so2} that,
for any $q(t)\in\laz[[t]]$,
$\Phi^{([2]_{\Omega})q(t)}=q(0)\cdot\square-
\operatornamewithlimits{Res}_{t=0}\frac{q(t)\cdot\Psi\cdot\omega_t}{t}$,
where $\Psi:\Omega^*\row\Omega^*[[t]][t^{-1}]$ is the multiplicative operation with
$\gamma_{\Psi}=x\cdot(x-_{\Omega}t)$. Thus, $\Psi=St(\ov{i})$, where $\ov{i}=\{-1\}$.
And so, $\Phi^{([2]_{\Omega})q(t)}=\Phi(\ov{i})^{([2]_{\Omega})q(t)}$, by the
very definition of the latter.
But on cellular spaces, the multiplication by $[2]_{\Omega}$ is injective,
as $\Omega^*$ of such a space is a free $\laz$-module (for $(\pp^{\infty})^{\times r}$,
it is a direct consequence of the
{\it projective bundle axiom} - see Subsect. \ref{FGL}). Hence, $\Phi(\ov{i})^{q(t)}=\Phi^{q(t)}$
as well (cf. \cite[proof of Corollary 2.17]{so2}).
\Qed
\end{proof}

The cases $p=2$ and $3$ are special, since we can choose our representatives $\ov{i}$ to
be invertible in $\zz$. For $p=2$, we have two such choices: $\{1\}$, or $\{-1\}$ (in \cite{so2},
$\{-1\}$ was "chosen"). For $p=3$, the choice is canonical: $\{1,-1\}$.
Thus, we get integral operations $\Phi(\ov{i}):\Omega^*\row\Omega^*[t^{-1}]$.
And, for arbitrary $p$, representatives can be chosen as the powers of some fixed prime $l$
(generating $(\zz/p)^*$), so that only one prime would be inverted. Moreover, this prime
can be selected in infinitely many ways, so, in a sense, the picture is as good as integral.

\subsection{Some properties}

First of all, we should mention the Riemann-Roch type result which describes how our operations behave
with respect to regular embeddings.

Let $\cn$ be a vector bundle on $X$ with $\Omega$-roots $\lambda_1,\ldots,\lambda_d$.
Denote as $\che{\cn}{\ov{i}}$ the element
$$
\prod_{l=1}^d\left(\frac{\gamma_{St(\ov{i})}(x)}{x}\right)(\lambda_l)=
\prod_{l=1}^d\prod_{j=1}^{p-1}(\lambda_l+_{\Omega}i_j\cdot_{\Omega}t)=
\prod_{j=1}^{p-1}c^{\Omega}(\cn)(i_j\cdot_{\Omega}t)
\in\Omega^*(X)[[t]],
$$
where $c^{\Omega}(\cn)(t)=\prod_{i=1}^d(t+_{\Omega}\lambda_l)$.
Analogously, one can define the Chow group versions: $c(\cn)(t)$ - the usual total Chern power series,
and $\cheCH{\cn}{\ov{i}}$ (where the formal addition is substituted by the usual one).
Then we have (cf. \cite[Proposition 3.1]{so2}):

\begin{prop}
\label{RR}
Let $X\stackrel{f}{\row}Y$ be a regular embedding of smooth quasi-projective varieties with the normal
bundle $\cn_f$, and $q(t)=q_0+q_1t+q_2t^2+\ldots\in\Omega^*(X)[[t]]$. Then
$$
\Phi(\ov{i})^{q(t)}(f_*(u))=f_*(\Phi(\ov{i})^{q(t)\cdot\che{\cn_f}{\ov{i}}}(u)).
$$
\end{prop}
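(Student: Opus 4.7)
The plan is to derive the formula from the defining relation of $\Phi(\ov{i})$ given by Theorem \ref{SOp}, combined with Riemann-Roch identities for its two multiplicative ingredients $\square^p$ and $St(\ov{i})$. First I would establish these two auxiliary identities. Iterating the projection formula together with the self-intersection formula $f^*f_*(v)=v\cdot\prod_l\lambda_l$ yields $(f_*(u))^p=f_*(u^p\cdot\prod_l\lambda_l^{p-1})$. For the multiplicative operation $St(\ov{i})$ with inverse Todd genus $\gamma_{St(\ov{i})}(x)=x\prod_{j=1}^{p-1}(x+_\Omega i_j\cdot_\Omega t)$, the standard Panin--Smirnov Riemann-Roch for multiplicative operations gives $St(\ov{i})(f_*(u))=f_*(St(\ov{i})(u)\cdot\che{\cn_f}{\ov{i}})$, since the corresponding Todd class $\prod_l\gamma_{St(\ov{i})}(\lambda_l)/\lambda_l$ equals $\che{\cn_f}{\ov{i}}$ by inspection.

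The key geometric observation is that $\che{\cn_f}{\ov{i}}|_{t=0}=\prod_l\lambda_l^{p-1}$, so $\che{\cn_f}{\ov{i}}-\prod_l\lambda_l^{p-1}\in t\cdot\Omega^*(X)[[t]]$. I would multiply the defining relation of $\Phi(\ov{i})$ applied to $u$ by $\che{\cn_f}{\ov{i}}$, use this observation to replace $u^p\cdot\che{\cn_f}{\ov{i}}$ by $u^p\cdot\prod_l\lambda_l^{p-1}$ modulo $t\cdot\Omega^*(X)[[t]]$, and push the result forward by $f_*$. The two Riemann-Roch identities from the previous paragraph then give
$$\square^p(f_*(u))-St(\ov{i})(f_*(u))-[p]_\Omega\cdot f_*\bigl(\che{\cn_f}{\ov{i}}\cdot\Phi(\ov{i})(u)\bigr)\in\Omega^*(Y)[\iis^{-1}][[t]]t.$$
Subtracting the defining relation of $\Phi(\ov{i})$ applied directly to $f_*(u)$ cancels the $\square^p$ and $St(\ov{i})$ terms, leaving
$$[p]_\Omega\cdot\bigl(\Phi(\ov{i})(f_*(u))-f_*(\che{\cn_f}{\ov{i}}\cdot\Phi(\ov{i})(u))\bigr)\in\Omega^*(Y)[\iis^{-1}][[t]]t.$$

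Applying $\operatornamewithlimits{Res}_{t=0}\frac{q(t)\,(-)\,\omega_t}{t}$ to the bracketed difference would then yield exactly the desired identity $\Phi(\ov{i})^{q(t)}(f_*(u))=f_*(\Phi(\ov{i})^{q(t)\cdot\che{\cn_f}{\ov{i}}}(u))$, using that $f_*$ is $\laz[[t]]$-linear and hence commutes with $\operatornamewithlimits{Res}_{t=0}$. The main obstacle sits at precisely this last step: one must pass from divisibility by $t$ only after multiplication by $[p]_\Omega$ (which the previous display provides) to divisibility by $t$ of the bracketed expression itself, which is what forces the residue to vanish. On cellular spaces this is immediate, since multiplication by $[p]_\Omega$ is injective on $\Omega^*((\pp^\infty)^{\times r})$ --- the latter being a free $\laz$-module and $\laz$ an integral domain --- so the bracketed expression already lies in $\Omega^*(Y)[\iis^{-1}][[t]]t$. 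For a general smooth $Y$, I would reduce to the cellular case by passing to the flag bundle of $\cn_f$ (where the $\Omega$-roots $\lambda_l$ become honest first Chern classes of line bundles) and invoking the universal description of Theorem \ref{MAIN}, as is done for the $p=2$ case in \cite[proof of Proposition 3.1]{so2}.
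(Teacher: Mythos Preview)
Your computations are sound through the display $[p]_\Omega\cdot D\in\Omega^*(Y)[\iis^{-1}][[t]]t$, where $D$ is the bracketed difference. The Riemann--Roch identities for $\square^p$ (projection formula plus self-intersection) and for $St(\ov{i})$ (Panin--Smirnov, applicable since $b_0$ is invertible in the target) are both correct, and the subtraction argument is clean.

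The gap is the last paragraph. You must pass from $[p]_\Omega\cdot D\in t\cdot(\ldots)$ to $D_{\leq 0}=0$, and on a general $Y$ this fails: the constant term of $[p]_\Omega$ is $p$, and $\Omega^*(Y)$ can have $p$-torsion, so the leading coefficient of $D$ is only known to be $p$-torsion, not zero. Your proposed fix --- ``pass to the flag bundle of $\cn_f$ and invoke Theorem~\ref{MAIN}'' --- does not close this. Theorem~\ref{MAIN} classifies \emph{operations} $A^n\to B^*$ by their values on $(\pp^\infty)^{\times l}$; but the assignment $u\mapsto\Phi(\ov{i})^{q(t)}(f_*(u))-f_*(\Phi(\ov{i})^{q(t)\cdot\che{\cn_f}{\ov{i}}}(u))$ is tied to a fixed embedding $f:X\to Y$ and is not a natural transformation on $\smk$ to which that theorem applies. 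Passing to the flag bundle of $\cn_f$ splits the bundle but does not make $\Omega^*$ of the base torsion-free. The reference to \cite[proof of Proposition~3.1]{so2} does not help either: that argument is geometric (the $p=2$ operations there are built from $\op{Hilb}_2$) and predates Theorem~\ref{MAIN}.

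The paper closes this gap by a structural manoeuvre you are missing. It first \emph{universalises} the normal bundle: replacing the roots $\lambda_l$ by the tautological classes $z_l$ on $(\pp^\infty)^{\times d}$, it sets up two genuine operations $G,\wt{G}:\Omega^*(Z)\to\Omega^*(Z\times(\pp^\infty)^{\times d})$ whose equality encodes the desired formula. On cellular $Z$ the identity $G=\wt{G}$ follows by exactly your division-by-$[p]_\Omega$ reasoning, so Theorem~\ref{MAIN} upgrades it to all $Z$. Only then does the paper specialise back to an actual push-forward, and for that it invokes a separate result --- the general non-additive Riemann--Roch \cite[Proposition~5.19]{PO} --- which expresses $F(f_*(u))$ for \emph{any} operation $F$ as a residue in the values $F(\prod_l z_l\cdot u)$. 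Your direct approach could be completed along these lines, but the universalisation step together with the appeal to \cite[Proposition~5.19]{PO} is not optional: it is precisely what turns a push-forward identity into something Theorem~\ref{MAIN} can act on.
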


\begin{proof}
Consider two operations:
$$
\Omega^*(Z)\stackrel{G,\wt{G}}{\lrow}\Omega^*(Z\times(\pp^{\infty})^{\times d}),
$$
where $G(v):=\Phi(\ov{i})^{q(t)}(\pi^*(v)\cdot\prod_{l=1}^d z_l)$, and
$\wt{G}(v):=\Phi(\ov{i})^{\wt{q}(t)}(\pi^*(v))$, where
$z_l=c_1^{\Omega}(\co(1)_l)$,
$\pi:X\times(\pp^{\infty})^{\times r}\row X$
is the projection, and
$\wt{q}(t)=q(t)\cdot\prod_{l=1}^d\gamma_{St(\ov{i})}(z_l)$.
We can write $\wt{q}(t)$ as $\wt{\wt{q}}(t)\cdot\prod_{l=1}^dz_l$, where
$\wt{\wt{q}}(t)=q(t)\cdot \prod_{l=1}^d\left(\frac{\gamma_{St(\ov{i})}(x)}{x}\right)(z_l)$.
Operations $G$ and $\wt{G}$ coincide on all $(\pp^{\infty})^{\times m}$.
This follows from: the fact that for the multiplicative
operation $H=St(\ov{i})$ we have: $H(\pi^*(v)\cdot\prod_{l=1}^dz_l)=
H(\pi^*(v))\cdot \prod_{l=1}^d\gamma_{St(\ov{i})}(z_l)$; and the fact that on cellular spaces such as $(\pp^{\infty})^{\times m}$, $\Phi(\ov{i})^{r(t)}$ can be written as
$\operatornamewithlimits{Res}_{t=0}
\frac{(r(0)\square^p-r(t)\cdot H)\omega_t}{p\cdot_{\Omega}t}$, where the division
by $p\cdot_{\Omega} t$ is uniquely defined (as $\Omega^*$ of such spaces is a free $\laz$-module).
By Theorem \ref{MAIN}, $G=\wt{G}$.
It remains to apply the general (non-additive) Riemann-Roch Theorem - see \cite[Proposition 5.19]{PO}.
Recall, that due to the Projective Bundle axiom, any element of
$\Omega^*(X\times (\pp^{\infty})^d)$ can be written as a (unique) $\Omega^*(X)$-power series $\alpha(z^A_1,\ldots,z^A_d)$
in the $1$-st Chern classes of the bundles $\co(1)$ from components. The superscripts $A$ is introduced
to indicate that we are dealing with the source of the operation. When we apply any operation $F$ to
$\alpha(z^A_1,\ldots,z^A_d)$, we obtain again some $\Omega^*(X)$-power series in
$z^B_1,\ldots,z^B_d$ (the same $1$-st Chern
classes, but in the target) which we denote $F(\alpha(z^A_1,\ldots,z^A_d))(z^B_1,\ldots,z^B_d)$.
Now we can plug whatever we want instead of the formal $B$-variables.
This way, we can describe what happens to $F$ under regular push-forwards. Namely, by
\cite[Proposition 5.19]{PO}, the condition $(b_{ii})$ of \cite[Definition 5.5]{PO} is satisfied, and so
we have, for any $u\in\Omega^*(X)$,
$$
F(f_*(u))=f_*\operatornamewithlimits{Res}_{s=0}
\frac{F(\prod_{l=1}^dz^A_l\cdot u)(z_l^B=s+_{\Omega}\lambda_l|_{l\in\ov{d}})\cdot\omega_s}
{\prod_{l=1}^d(s+_{\Omega}\lambda_l)\cdot s},\hspace{5mm}\text{which implies}:
$$
\begin{equation*}
\begin{split}
&\Phi(\ov{i})^{q(t)}(f_*(u))=f_*\operatornamewithlimits{Res}_{s=0}
\frac{\Phi(\ov{i})^{\wt{q}(t)}(u)(z_l=s+_{\Omega}\lambda_l|_{l\in\ov{d}})\cdot\omega_s}
{\prod_{l=1}^d(s+_{\Omega}\lambda_l)\cdot s}=\\
&f_*\Phi(\ov{i})^{\wt{\wt{q}}(t)}(u)(z_l=\lambda_l|_{l\in\ov{d}})=
f_*\Phi(\ov{i})^{q(t)\cdot\che{\cn_f}{\ov{i}}}(u).
\end{split}
\end{equation*}
(Notice, that $\Phi(\ov{i})^{\wt{q}(t)}$ is $\Omega^*(X\times(\pp^{\infty})^{\times d})$-linear in
$\wt{q}$, by definition, and so, $\Phi(\ov{i})^{\wt{q}(t)}(u)(z_l|_{l\in\ov{d}})=
\left(\prod_{l\in\ov{d}}z_l\right)\cdot\Phi(\ov{i})^{\wt{\wt{q}}(t)}(u)(z_l|_{l\in\ov{d}})$.
)
\Qed
\end{proof}

As an application of the above result we obtain that Symmetric operations provide
obstructions for a cobordism class
to be presented by a class of an embedding (cf. \cite[Proposition 3.2]{so2}).

\begin{prop}
\label{emb}
Let $V\stackrel{v}{\row}X$ be a regular embedding. Then $\Phi(\ov{i})(v)=0$.
\end{prop}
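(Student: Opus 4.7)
The plan is to use the Riemann-Roch formula of Proposition \ref{RR} to reduce the computation of $\Phi(\ov{i})(v_*(1))$ to the value of $\Phi(\ov{i})$ on $1$, and then to exploit the multiplicativity of $\square^p$ and $St(\ov{i})$ to show that $\Phi(\ov{i})(1) = 0$.

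I would first show $\Phi(\ov{i})(1) = 0 \in \laz[\iis^{-1}][t^{-1}]$. By the defining property of $\Phi(\ov{i})$ from Theorem \ref{SOp},
$$
\bigl(\square^p - St(\ov{i}) - [p]_\Omega \cdot \Phi(\ov{i})\bigr)(1) \in \laz[\iis^{-1}][[t]]\cdot t.
$$
Since $\square^p$ and $St(\ov{i})$ are multiplicative operations, both send $1$ to $1$, so the first two terms cancel, leaving $[p]_\Omega \cdot \Phi(\ov{i})(1) \in t\cdot\laz[\iis^{-1}][[t]]$. The representatives $i_j$ are coprime to $p$, so $\iis$ is coprime to $p$, and the constant term $p$ of $[p]_\Omega$ is not a zero-divisor in $\laz[\iis^{-1}]$. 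Since $\Phi(\ov{i})(1)$ is a polynomial in $t^{-1}$, matching the most negative $t$-powers of the product and inducting downward on the $t^{-1}$-degree force every coefficient of $\Phi(\ov{i})(1)$ to vanish. In particular $\Phi(\ov{i})^{r(t)}(1) = 0$ for every $r(t)$.

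Applying Proposition \ref{RR} to the embedding $v: V \hookrightarrow X$ with $u = 1$, for every $q(t) \in \Omega^*(X)[[t]]$ we then obtain
$$
\Phi(\ov{i})^{q(t)}(v_*(1)) = v_*\bigl(\Phi(\ov{i})^{v^*q(t)\cdot\che{\cn_v}{\ov{i}}}(1)\bigr) = v_*(0) = 0.
$$

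It remains to pass from vanishing of all slices to vanishing of the operation itself. Writing $\Phi(\ov{i})(v_*(1)) = \sum_{k=0}^N c_k t^{-k}$ with $c_k \in \Omega^*(X)[\iis^{-1}]$ and unpacking the definition of $\Phi(\ov{i})^{t^n}$ using $\omega_t = ([\pp^0] + [\pp^1]t + [\pp^2]t^2 + \ldots)\,dt$, the residue at $t=0$ computes to $\Phi(\ov{i})^{t^n}(v_*(1)) = \sum_{k \geq n} c_k\cdot[\pp^{k-n}]$, so a descending induction on $n$ starting from $n = N$ gives $c_k = 0$ for every $k$. The only mildly technical point is this last coefficient-extraction argument; the rest is a direct application of the earlier machinery, with the computation $\Phi(\ov{i})(1) = 0$ as the crucial input.
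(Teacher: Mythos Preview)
Your proof is correct and follows essentially the same strategy as the paper's: establish $\Phi(\ov{i})(1)=0$, then invoke the Riemann--Roch formula of Proposition~\ref{RR}. The paper dispatches the first step in one phrase (``by dimensional considerations'': on $\op{Spec}(k)$ the $t^{-k}$-coefficient of $\Phi(\ov{i})(1)$ must live in $\laz^{k}[\iis^{-1}]=0$ for $k>0$, and the constant term is then forced to vanish), whereas you argue directly from the defining relation and the fact that $p$ is not a zero-divisor; both are valid. One small point you leave implicit, which the paper makes explicit, is the passage from $\Phi(\ov{i})(1_{\op{Spec}(k)})=0$ to $\Phi(\ov{i})(1_V)=0$ for arbitrary $V$: this uses that $\Phi(\ov{i})$, being an operation, commutes with pull-back along $V\to\op{Spec}(k)$. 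Conversely, you spell out the final slice-to-total-operation step that the paper omits.
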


\begin{proof}
By dimensional considerations, $\Phi(\ov{i})(1_{\op{Spec}(k)})=0$. Since $\Phi(\ov{i})$ is
an operation, we have: $\Phi(\ov{i})(1_{V})=0$, for all $V$. Then it follows from
Proposition \ref{RR} that $\Phi(\ov{i})(v)=0$.
\Qed
\end{proof}

In another direction, our Symmetric operations help to study $\laz$-torsion and relations in $\Omega^*(X)$.
For this purpose, let us introduce a close relative of the invariant of M.Rost.

\begin{defi}
\label{nuPi}
Having $p$ and $\ov{i}$, and a smooth projective $U$ of positive dimension, define
$$
\eta_{p,\ov{i}}(U):=-\frac{\ddeg(\cheCH{-T_U}{\ov{i}}_0)}{p}\in\zz[\iis^{-1}],
$$
where
$T_U$ is the tangent bundle, and
$\cheCH{-T_U}{\ov{i}}_0$ is the zero-dimensional ($t^{-p\ddim(U)}$-) component of the
$\prod_{j=1}^{p-1}c(-T_U)(i_j\cdot t)$ (here $c(\cn)(t)$ is the total Chern power series -
see above).
\end{defi}

The fact that $\eta_{p,\ov{i}}$ is well-defined follows from the following result:

\begin{prop}
\label{F1}
Let $U$ be smooth projective variety of dimension $n>0$, and $[U]\in\laz$ be the respective class. Then
$$
\ddeg(\Phi(\ov{i})^{t^{pn}}([U]))=\eta_{p,\ov{i}}(U)\in\zz[\iis^{-1}].
$$
\end{prop}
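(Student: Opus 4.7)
The plan is to expand the defining relation of $\Phi(\ov{i})$ from Theorem \ref{SOp}, use the Riemann--Roch formula for the multiplicative operation $St(\ov{i})$ to express $St(\ov{i})([U])$ as a push-forward from $U$, and then match the resulting residue with the Chern-number formula defining $\eta_{p,\ov{i}}(U)$ after descending to Chow groups.

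First I would work over $\laz\otimes\qq$, where $p$ and hence $[p]_{\Omega}$ is invertible. By Theorem \ref{SOp},
$$\Phi(\ov{i})([U])=[p]_{\Omega}^{-1}\bigl((\square^{p}-St(\ov{i}))([U])-\beta([U])\bigr)$$
for some $\beta([U])\in\laz[\iis^{-1}]\otimes\qq[[t]]\cdot t$. Multiplying by $t^{pn}\omega_{t}/t$ and taking $\operatornamewithlimits{Res}_{t=0}$, a direct count of the powers of $t$ (all $\geq pn-1\geq 1$) shows that the contributions of the correction $\beta([U])$ and of the $t$-constant $\square^{p}([U])=[U]^{p}$ both vanish, since no $t^{-1}$-term can arise. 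This leaves
$$\Phi(\ov{i})^{t^{pn}}([U])=-\operatornamewithlimits{Res}_{t=0}\frac{t^{pn}\,St(\ov{i})([U])\,\omega_{t}}{[p]_{\Omega}\,t}.$$

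Next I would apply the Panin--Smirnov Riemann--Roch (Theorem \ref{PSLM}) to the multiplicative operation $St(\ov{i})\colon\Omega^{*}\to B^{*}=\Omega^{*}[\iis^{-1}][[t]][t^{-1}]$, noting that its leading coefficient $b_{0}=\prod_{j}(i_{j}\cdot_{\Omega}t)=\iis t^{p-1}+\ldots$ is a unit in $B^{*}$. The reorientation Todd twist of a vector bundle $\cn$ with $\Omega$-Chern roots $\xi_{l}$ is $\prod_{l}\gamma_{St(\ov{i})}(\xi_{l})/\xi_{l}=\che{\cn}{\ov{i}}$, so applied to $[U]=\pi_{*}(1_{U})$ for $\pi\colon U\to\op{Spec}(k)$ this yields $St(\ov{i})([U])=\pi_{*}(\che{T_U}{\ov{i}}^{-1})$; the inverse exists in $B^{*}(U)$ because its leading term $\iis^{n}t^{n(p-1)}$ is a unit and the remaining correction is nilpotent by the $(DIM)$ axiom. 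The projection formula then lets me move $\pi_{*}$ outside of the residue.

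Finally I would realize the identity in Chow groups. The natural map $\Omega^{*}\to\op{CH}^{*}$ sends $[p]_{\Omega}\mapsto p$, $\omega_{t}\mapsto dt$ (since $[\pp^{m}]\mapsto 0$ for $m>0$), $\pi_{*}^{\Omega}\mapsto\pi_{*}^{CH}$, and identifies $\che{T_U}{\ov{i}}^{-1}$ with the Laurent series $\cheCH{-T_U}{\ov{i}}$ under the $K$-theoretic convention for $-\cn$ encoded in Definition \ref{nuPi}. Since the left-hand side already sits in $\laz^{0}[\iis^{-1}]=\zz[\iis^{-1}]$, it is preserved by this realization; extracting the $t^{-1}$-coefficient of $t^{pn-1}\cheCH{-T_U}{\ov{i}}$ selects precisely the $t^{-p\ddim(U)}$-component $\cheCH{-T_U}{\ov{i}}_{0}$, and applying $\ddeg$ produces $-\ddeg(\cheCH{-T_U}{\ov{i}}_{0})/p=\eta_{p,\ov{i}}(U)$. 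As a byproduct, the a priori integrality of the left-hand side shows that $\eta_{p,\ov{i}}(U)\in\zz[\iis^{-1}]$, justifying Definition \ref{nuPi}. The main obstacle I anticipate is the Riemann--Roch bookkeeping in the second step: pinning down the exact form of the Todd twist for $St(\ov{i})$ (whose $b_{0}$ is a power of $t$), verifying invertibility of $\che{T_U}{\ov{i}}$ in $B^{*}(U)$, and matching its Chow-theoretic inverse with $\cheCH{-T_U}{\ov{i}}$; once that identification is secured, the residue and degree computations are routine.
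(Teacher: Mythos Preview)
Your proposal is correct and follows essentially the same route as the paper: reduce $\Phi(\ov{i})^{t^{pn}}([U])$ to a residue involving $St(\ov{i})([U])$ (the paper does this by multiplying by $p$ and using that the answer lies in $\laz_0[\iis^{-1}]=\zz[\iis^{-1}]$, rather than tensoring with $\qq$, but this is only a cosmetic difference), then apply Riemann--Roch for the multiplicative operation $St(\ov{i})$ to $\pi_*(1_U)$ and read off the degree in Chow groups. The bookkeeping you flag as the main obstacle (invertibility of $\che{T_U}{\ov{i}}$ in $B^*(U)$ and its identification with $\cheCH{-T_U}{\ov{i}}$ after $pr$) is indeed routine and exactly what the paper uses implicitly.
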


\begin{proof}
We know that $\Phi(\ov{i})^{t^{pn}}([U])\in\laz_0[\iis^{-1}]=\zz[\iis^{-1}]$, and
\begin{equation*}
\begin{split}
&p\cdot\Phi(\ov{i})^{t^{pn}}([U])=
-\operatornamewithlimits{Res}_{t=0}\frac{t^{pn}\cdot
St(\ov{i})([U])\omega_t}{t}=
-\operatornamewithlimits{Res}_{t=0}\frac{t^{pn}\cdot
St(\ov{i})(\pi_*(1_U))\omega_t}{t}=\\
&-\operatornamewithlimits{Res}_{t=0}\frac{t^{n}\cdot
-\pi_*(\cheCH{-T_U}{\ov{i}})\omega_t}{t}=
-\op{deg}(\cheCH{-T_U}{\ov{i}}_0),
\end{split}
\end{equation*}
where $\pi:U\row\op{Spec}(k)$ is the projection.
\Qed
\end{proof}

\begin{rem}
\label{rost}
The invariant of M.Rost is defined a bit differently. It is the degree of the zero cycle
$c_1(\cl)^{np}$, where $\cl$ is the standard linear bundle on a smooth, but not proper variety $C^pU\backslash\Delta(U)$ (here $\cl$ is a quotient of $\co$ on $U^{\times p}\backslash\Delta(U)$
by the $\zz/p$-action). Such a degree is well-defined in $\zz/n_U$, where $n_U$ is the greatest common
divisor of the degrees of closed points on $U$.
\end{rem}

Composing the Total Landweber-Novikov operation with the canonical morphism of theories
$pr:\Omega^*\row\op{CH}^*$ and evaluating on a point, we obtain the Hurewitz map
$$
\laz\row\zz[b_1,b_2,\ldots].
$$
Coefficients at particular monomials give us characteristic numbers
$\chi_{\ov{b}^{\ov{J}}}:\laz\row\zz$. On the class of a smooth projective variety $U$ these
can be alternatively computed as degrees of zero-cycles given by certain polynomials in
Chern classes of $-T_U$.
More precisely, the respective zero-cycle will be the coefficient at $\ov{b}^{\ov{J}}$ in the
product $\prod_{\lambda\in\Lambda}(1+b_1\lambda+b_2\lambda^2+\ldots)$, where
$\Lambda$ is the set of Chow-roots of $-T_U$.
Let $I(p)\subset\laz$ be the ideal consisting of classes whose all characteristic
numbers are divisible by $p$. Due to results of Landweber, it is stable under the action of
the Landweber-Novikov operations (which is obvious as soon as we know that Landweber-Novikov operations form an algebra, and that characteristic numbers are
exactly the results of various Landweber-Novikov operations applied to the class).
I recall, that an element $x\in\laz_{p^r-1}$ of dimension $(p^r-1)$ is called a $\nu_r$-element, if
it belongs to $I(p)$, and the (only) additive characteristic number of it is not divisible by $p^2$.
After projecting to $BP$-theory such an element can be chosen as a polynomial generator of the
coefficient ring - see \cite{Wi}.

Since $\zz[\iis^{-1}]/p=\zz/p$, we can compare $\ov{\eta}_{p,\ov{i}}(U)\in\zz/p$
for different $\ov{i}$.

\begin{prop} {\rm (cf. \cite{R})}
\label{il}
\begin{itemize}
\item[$(1)$ ]
Let $[U]\in I(p)$. Then, for all $\ov{i}$ and $\ov{l}$,
$$
\ov{\eta}_{p,\ov{i}}(U)=\ov{\eta}_{p,\ov{l}}(U);
$$
\item[$(2)$ ] Let $U$ has no zero cycles of degree prime to $p$.
Then, up to sign, $\ov{\eta}_{p,\ov{i}}(U)$ coincides with
$\ov{\eta}_p(U)$ - the invariant of M.Rost mod $p$;
\item[$(3)$ ] If $[U]$ is a $\nu_r$-element in $\laz$, then $\ov{\eta}_p(U)\neq 0$.
\end{itemize}
\end{prop}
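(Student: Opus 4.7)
The plan treats (1), (2), (3) in sequence; (1) is an elementary polynomial/congruence argument, (2) is a geometric comparison with Rost's construction (the main obstacle), and (3) combines (2) with a Newton-identity calculation on Chern roots.

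For (1), expand
\[
\cheCH{-T_U}{\ov i}\,=\,\prod_{j=1}^{p-1}\prod_l(i_j t-\lambda_l),
\]
where $\lambda_l$ are the Chern roots of $T_U$. The zero-dimensional component has degree equal to an integer polynomial $P(i_1,\ldots,i_{p-1})$ whose coefficients are characteristic numbers of $U$. For $[U]\in I(p)$ every such coefficient lies in $p\zz$, so $\eta_{p,\ov i}(U)=-P(\ov i)/p$ is itself an integer polynomial in the $i_j$. A shift $i_j\mapsto i_j+pm_j$ alters each monomial by an element of $p\zz$ (Taylor expansion), and hence the reduction modulo $p$ is independent of the choice of $\ov i$.

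For (2), Fermat's little theorem in the form $\prod_{j=1}^{p-1}(jt-\lambda)\equiv(-1)^{p-1}(\lambda^{p-1}-t^{p-1})\pmod p$, together with $\{i_j\}\equiv\{1,\ldots,p-1\}\pmod p$, yields the mod-$p$ identity
\[
\cheCH{-T_U}{\ov i}\;\equiv\;(-1)^{n(p-1)}\prod_l(\lambda_l^{p-1}-t^{p-1})\pmod p,
\]
manifestly independent of $\ov i$. The remaining task is to identify this degree modulo $p$ with $\ddeg(c_1(\cl)^{np})$: the line bundle $\cl$ on $(U^p\setminus\Delta(U))/(\zz/p)$ is the descent of $\co$ along the standard character of $\zz/p$, so $c_1(\cl)$ corresponds, in the Borel model, to the equivariant first Chern class $t$; the normal bundle of $\Delta(U)\hookrightarrow U^p$ is $(p-1)T_U$, and applying the localization/excess-intersection sequence relating $\op{CH}^*((U^p\setminus\Delta)/(\zz/p))$ to the $\zz/p$-equivariant Chow of $U^p$, the class $c_1(\cl)^{np}$ is expressed as the push-forward of the top equivariant Chern class of the normal bundle, which equals (up to sign) the top-codim part of $\prod_l(\lambda_l^{p-1}-t^{p-1})\pmod p$. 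The sign discrepancy is absorbed by $(-1)^{n(p-1)}$ and Wilson's congruence $(p-1)!\equiv-1\pmod p$. This identification, following Rost's argument (cf.\ \cite{R}), is the main obstacle.

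For (3), by (2) it suffices to show $\ov\eta_{p,\ov i}(U)\neq 0$ for any $\nu_r$-element $[U]$ of dimension $n=p^r-1$. From the formula $\cheCH{-T_U}{\ov i}\equiv\prod_l(\lambda_l^{p-1}-t^{p-1})\pmod p$, the zero-dim component admits contributions only from subsets $S\subseteq\{1,\ldots,n\}$ with $|S|(p-1)=n$, forcing $|S|=k:=(p^r-1)/(p-1)=1+p+\cdots+p^{r-1}$, and one has $k\equiv 1\pmod p$. Thus the zero-dim part equals $\pm t^{n(p-2)}\cdot e_k(\lambda_l^{p-1})$. Newton's identity gives
\[
k\cdot e_k(\lambda^{p-1})\;=\;\pm s_n(T_U)\;+\;(\text{products of lower }e_i(\lambda^{p-1})\text{ with }p_j(\lambda^{p-1})=s_{j(p-1)}(T_U)).
\]
Pairing against $[U]$, which is Milnor-primitive modulo $p^2$, the decomposable cross-terms pair into $p^2\zz$ by the standard duality in the Hopf algebra $\laz\otimes\ff_p$, so $\ddeg(e_k(\lambda^{p-1}))\equiv\pm k^{-1}s_n([U])\pmod{p^2}$. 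Dividing by $p$, $\ov\eta_{p,\ov i}(U)\equiv\pm k^{-1}\cdot s_n([U])/p\pmod p$, which is nonzero since $s_n([U])/p\not\equiv 0\pmod p$ by the defining property of a $\nu_r$-element.
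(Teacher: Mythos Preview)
Your treatment of (1) is correct and more elementary than the paper's. The paper instead argues via the operations themselves: since $\gamma_{St(\ov{i})}-\gamma_{St(\ov{l})}$ is divisible by $[p]_{\Omega}$, the difference $\Phi(\ov{i})-\Phi(\ov{l})$ turns out to be a linear combination of Landweber--Novikov operations; as $I(p)$ is stable under these and has zero-dimensional part $p\zz$, the conclusion follows. Your direct polynomial/congruence argument bypasses this machinery entirely. (One caveat: your opening formula $\prod_j\prod_l(i_jt-\lambda_l)$ is not literally $\cheCH{-T_U}{\ov{i}}$---the virtual bundle $-T_U$ does not have Chern roots $-\lambda_l$---but the argument survives, since all you actually use is that the degree is an integer polynomial in the $i_j$ over $\zz[\iis^{-1}]$ whose coefficients are characteristic numbers.) For (2), both you and the paper defer to Rost's computation; the approaches are equivalent.

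Part (3), however, has a genuine gap. Your Newton-identity step writes $k\cdot e_k(\lambda^{p-1})$ as $\pm s_n(T_U)$ plus cross-terms of the form $p_j\cdot e_{k-j}$, and then asserts that the degrees of these cross-terms lie in $p^2\zz$ ``by the standard duality in the Hopf algebra $\laz\otimes\ff_p$.'' This would require the Hurewicz image $h([U])/p$ to be \emph{primitive} modulo $p$ in $H_*(MU)$, which is strictly stronger than the $\nu_r$-condition: the definition only says that all characteristic numbers lie in $p\zz$ and that $s_n\notin p^2\zz$; it places no constraint on the remaining characteristic numbers modulo $p^2$. Nothing prevents a $\nu_r$-element from having, say, $\chi_{(1,n-1)}([U])\equiv p\pmod{p^2}$, so your cross-terms need not vanish modulo $p^2$. (A secondary issue: with $-T_U$ handled correctly the relevant symmetric function is the complete homogeneous $h_k$ rather than $e_k$; Newton's identities still apply, but the details need adjustment.)

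The paper circumvents this by a structural argument. It identifies the characteristic number in question as $\chi_{b_{p-1}^d}$ (partition $(p-1,\ldots,p-1)$) and observes that the operations $S_{L-N}^{b_{p-1}^m}$ descend to Steenrod operations on $\op{CH}^*/p$, whence $\chi_{b_{p-1}^m}\in p\zz$ for \emph{every} class and every $m>0$; multiplicativity then forces $\chi_{b_{p-1}^d}\in p^2\zz$ on decomposable elements of $\laz$. Since any two $\nu_r$-elements differ in $\laz\otimes\zz_{(p)}$ by a unit multiple plus a decomposable, the value of $\chi_{b_{p-1}^d}$ modulo $p^2$ is the same (up to a unit) on all of them, and one finishes with an explicit computation on the degree-$p$ hypersurface in $\pp^{p^r}$. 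Your route could be repaired by inserting precisely this reduction to decomposables and a test variety, but the abstract duality claim as written does not carry the weight you put on it.
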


\begin{proof}
(1)
We know that $(\gamma_{St(\ov{i})}-\gamma_{St(\ov{l})})$ is divisible by $[p]_{\Omega}$.
But any multiplicative operation $G:\Omega^*\row B^*$ with $\gamma_G(x)=b_0x+b_1x^2+\ldots$,
where $b_0$ is invertible, is a generalized specialization of the {\it Total Landweber-Novikov
operation}. In particular, $G$ is a linear combination of specializations of the individual
Landweber-Novikov operations with coefficients - monomials in $b_0^{\pm 1}$ and $b_k/b_0,\,k>0$.
Thus $(St(\ov{i})-St(\ov{l}))$ will be a linear combination of the Landweber-Novikov operations
with coefficients divisible by $[p]_{\Omega}$ (note, that this linear combination will depend
on the component $\Omega^n$ on which it acts, as our operation is unstable).
This implies that the difference of two Symmetric operations
$$
(\Phi(\ov{i})-\Phi(\ov{l})):\Omega^*\row\Omega^*[\iis^{-1},\lls^{-1}][t^{-1}]
$$
is a linear combination of the Landweber-Novikov operations.
It remains to observe that the ideal $I(p)$ is stable under the latter, and
the zero-dimensional component of $I(p)$ is $p\cdot\zz$.

(2) If $U$ has no zero cycles of degree prime to $p$, then we have the surjection
$\zz/n_U\twoheadrightarrow\zz/p$ (notice, that, in particular, $[U]\in I(p)$).
It follows from computations of M.Rost in
\cite{R} that $-\iis^{\ddim(U)}\cdot\ov{\eta}_{p,\ov{i}}(U)=\ov{\eta}_p(U)$.
Remains to observe that $\iis\equiv -1\,(\,mod\,p)$.

(3) Since $\prod_{j=1}^{p-1}(x+jt)\equiv\prod_{j=1}^{p-1}(x+\eps^jt)\equiv(x^{p-1}+t^{p-1})\,\,(\,mod\,p)$,
where $\eps$ is a primitive root of $1$ of degree $(p-1)$,
by the considerations from part $(1)$, we obtain that the characteristic number for
$\cheCH{-T_U}{\ov{i}}$ can be substituted by the one for
$b(-T_U)$, where $b(\cn)(t)=\prod_{j=1}^{p-1}c(\cn)(\eps^j\cdot t)$.
Notice, that $(b(-T_U))_{0}$ is the characteristic number corresponding to the partition
$(p-1,p-1,\ldots,p-1)$, or in other words, to the Landweber-Novikov operation
$S_{L-N}^{b_{p-1}^d}$, where $d=\frac{\ddim(U)}{p-1}$.
The latter operation is a component of the multiplicative operation
$$
\sum_{m\geq 0}b_{p-1}^m\cdot S_{L-N}^{b_{p-1}^m}:\Omega^*\row\Omega^*[b_{p-1}]
$$
(a specialization of the Total Landweber-Novikov operation:
$b_i\mapsto 0$, $i\neq p-1$). And operations $S_{L-N}^{b_{p-1}^m}$ descend to
Steenrod operations on $\op{CH}^*/p$, which implies that $\chi_{b_{p-1}^m}$
is always divisible by $p$, for $m>0$.
It follows that our characteristic number $\chi_{b_{p-1}^d}$
is divisible by $p^2$ on every decomposable element of the Lazard ring.
Hence, modulo $p^2$ it is the same
(up to a factor invertible modulo $p$)
on each $\nu_r$-element (since such an element can be chosen as a polynomial generator of
$\laz\otimes\zz_{(p)}$).
It is sufficient to compute it on the class of a hypersurface $Q$ of
degree $p$ in $\pp^{p^r}$. And for such a hypersurface,
$\ov{\chi}_{b_{p-1}^d}(Q)=(-1)^r\dbinom{\frac{p^{r+1}-1}{p-1}}{\frac{p^{r}-1}{p-1}}\neq 0\in\zz/p\zz$.
\phantom{a}\hspace{5mm}\Qed
\end{proof}

The above invariants can be applied to the computation of Chow traces $\phi(\ov{i})^{q(t)}$
of Symmetric operations, that is, compositions
$\Omega^*\stackrel{\Phi(\ov{i})^{q(t)}}{\lrow}\Omega^*\stackrel{pr}{\lrow}\op{CH}^*$.
In analogy with slices of $\Phi$,
for any $f(t)\in\laz[[t]][t^{-1}]$, let us denote as $St(\ov{i})^{f(t)}$ the operation
$\operatornamewithlimits{Res}_{t=0}\frac{f(t)\cdot St(\ov{i})\cdot\omega_t}{t}$, and as
$st(\ov{i})^{f(t)}$ the Chow trace of it.

The following result shows that if we are given $u\cdot v$, where $u\in\laz_{>0}$, then using
Symmetric operations we can obtain, if not $v$ itself, at least, some multiple of it's Chow trace
$pr(v)$. And the coefficient involved is invertible modulo $p$ in interesting situations
(which will not be the case if one uses Landweber-Novikov, or Steenrod operations instead).

\begin{prop}
\label{uv}
Let $v\in\Omega^*(X)$ and $u=[U]\in\laz_{>0}$.
Let $q(t)\in\op{CH}^*(X)[[t]]$. Then
$$
\phi(\ov{i})^{q(t)}(u\cdot v)=\eta_{p,\ov{i}}(U)\cdot st(\ov{i})^{q(t)t^{-p\cdot\ddim(U)}}(v).
$$
In particular, if $k=p\ddim(u)-(p-1)codim(v)$ is positive, then
$$
\phi(\ov{i})^{t^k}(u\cdot v)=\eta_{p,\ov{i}}(U)\cdot\iis^{codim(v)}\cdot pr(v).
$$
\end{prop}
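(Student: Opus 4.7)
The plan is to establish the desired identity after multiplication by $p$, and then divide by $p$ using Theorem \ref{MAIN} on cellular spaces (where Chow groups are torsion-free). First, I would multiply the defining relation $\square^p - St(\ov{i}) - [p]_\Omega \cdot \Phi(\ov{i}) \in \Omega^*[\iis^{-1}][[t]]t$ by $q(t)\omega_t/t$ and take $\operatornamewithlimits{Res}_{t=0}$. Since $q(t) \in \op{CH}^*(X)[[t]]$ has no negative powers of $t$, the error term (which lies in $\Omega^*[\iis^{-1}][[t]]t$) contributes nothing to the residue; applying $pr$ then kills the positive-$t$ coefficients of $[p]_\Omega$ (whose coefficients lie in $\laz_{>0}$), leaving
\[
p \cdot \phi(\ov{i})^{q(t)}(w) = q_0 \cdot pr(w)^p - st(\ov{i})^{q(t)}(w), \qquad \text{for every } w \in \Omega^*(X).
\]
For $w = u \cdot v$, the relation $pr(\laz_{>0}) = 0$ forces $pr(uv)^p = 0$, and hence $p \cdot \phi(\ov{i})^{q(t)}(uv) = -st(\ov{i})^{q(t)}(uv)$.

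Next I would analyze $st(\ov{i})^{q(t)}(uv)$ via multiplicativity of $St(\ov{i})$ and a codimension count. Since $pr$ is a ring homomorphism,
\[
st(\ov{i})^{q(t)}(uv) = \operatornamewithlimits{Res}_{t=0} \frac{q(t) \cdot pr(St(\ov{i})(u)) \cdot pr(St(\ov{i})(v)) \cdot \omega_t}{t}.
\]
The factor $pr(St(\ov{i})(u))$ sits in $\op{CH}^{-pn}(\op{Spec}(k))[\iis^{-1}][[t]][t^{-1}]$ with $n = \ddim(U)$, and $\op{CH}^k(\op{Spec}(k))$ vanishes for $k \neq 0$, so only the $t^{-pn}$-coefficient $C$ survives. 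The relation $p \cdot \Phi(\ov{i})^{t^{pn}}([U]) = -\operatornamewithlimits{Res}_{t=0}(t^{pn} St(\ov{i})([U])\omega_t/t)$ from Proposition \ref{F1}, combined with $\Phi(\ov{i})^{t^{pn}}([U]) = \eta_{p,\ov{i}}(U)$, identifies $C = -p \cdot \eta_{p,\ov{i}}(U)$. Substituting gives $st(\ov{i})^{q(t)}(uv) = -p \cdot \eta_{p,\ov{i}}(U) \cdot st(\ov{i})^{q(t) t^{-pn}}(v)$, and combining with the previous paragraph,
\[
p \cdot \phi(\ov{i})^{q(t)}(uv) = p \cdot \eta_{p,\ov{i}}(U) \cdot st(\ov{i})^{q(t) t^{-pn}}(v).
\]

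To cancel the factor $p$, I view both sides as operations in $v \in \Omega^*(X)$ and appeal to Theorem \ref{MAIN}: it suffices to verify the desired equality on $(\pp^\infty)^{\times l}$ for all $l \geq 0$, and on these cellular spaces $\op{CH}^*$ is a polynomial ring over $\zz$, hence $p$-torsion-free, so $p$-cancellation is legitimate there. This establishes the first formula of the proposition on all smooth quasi-projective varieties. For the ``in particular'' assertion, substituting $q(t) = t^k$ with $k = p\ddim(U) - (p-1)codim(v)$ turns $q(t) t^{-pn}$ into $t^{-(p-1)c}$, where $c = codim(v)$; a direct residue computation (using $pr([\pp^m]) = 0$ for $m>0$) reduces $st(\ov{i})^{t^{-(p-1)c}}(v)$ to $pr$ of the $t^{(p-1)c}$-coefficient of $St(\ov{i})(v)$. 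This coefficient equals $\iis^c \cdot pr(v)$ by Theorem \ref{PSLM}: both assignments $v \mapsto pr(\text{top-}t\text{-coefficient of } St(\ov{i})(v))$ and $v \mapsto \iis^{codim(v)} \cdot pr(v)$ are multiplicative operations $\Omega^* \to \op{CH}^*[\iis^{-1}]$ with inverse Todd genus $\iis \cdot x$ (as seen by inspecting $\gamma_{St(\ov{i})}(x) = x \prod_j(x +_\Omega i_j \cdot_\Omega t)$), hence coincide. The main obstacle throughout is the division by $p$, which genuinely requires Theorem \ref{MAIN} and the cellularity of $(\pp^\infty)^{\times l}$, since $\op{CH}^*(X)$ may have $p$-torsion in general.
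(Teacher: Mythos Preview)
Your approach is essentially the paper's: reduce to cellular spaces via Theorem~\ref{MAIN} (where division by $p$, equivalently by $[p]_\Omega$, is legitimate since $\laz$-modules there are free), then use multiplicativity of $St(\ov{i})$ and Proposition~\ref{F1} to identify $pr(St(\ov{i})(u))$. The paper front-loads the reduction to $(\pp^\infty)^{\times l}$ and computes there directly; you instead establish the identity globally up to a factor of $p$ and invoke cellularity only at the end --- these are reorderings of the same computation.

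Two small gaps. First, you should explicitly reduce to $q(t)\in\zz[[t]]$ by $\op{CH}^*(X)$-linearity of both sides (as the paper does in its first sentence): without this, ``multiplying the defining relation by $q(t)\omega_t/t$'' does not make sense when $q$ has $\op{CH}^*(X)$-coefficients, and you cannot regard the two sides as operations in $v$ (for Theorem~\ref{MAIN}) unless $q$ is independent of $X$. Second, your justification of the ``in particular'' via Theorem~\ref{PSLM} is not solid: the assignment $v\mapsto pr(\text{top-}t\text{-coefficient of }St(\ov{i})(v))$ is not well-defined as an additive (let alone multiplicative) operation, since the top $t$-degree of $pr(St(\ov{i})(v))$ jumps with $v$, so Theorem~\ref{PSLM} does not apply. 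The paper instead reads off $st(\ov{i})^{t^{-(p-1)c}}(v)=\iis^{c}\cdot pr(v)$ directly from the shape $\gamma_{st(\ov{i})}(x)=\iis\, t^{p-1}\cdot x+\cdots+x^{p}$; alternatively one can check this identity on $(\pp^\infty)^{\times l}$ and invoke Theorem~\ref{MAIN} once more.
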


\begin{proof}
Since both sides of the equation are $\op{CH}^*(X)$-linear on $q(t)$, we can assume that $q(t)\in\zz[[t]]$.
Then it follows from Theorem \ref{MAIN} that it is sufficient to compare our operations
(as operations on $v$) on cellular
spaces $(\pp^{\infty})^{\times l}$. On such a space, using the fact that $St(\ov{i})$ is multiplicative,
we can write LHS as
\begin{equation*}
\begin{split}
& -pr \operatornamewithlimits{Res}_{t=0}\frac{St(\ov{i})(u\cdot v)\cdot q(t)\cdot\omega_t}
{p\cdot_{\Omega} t}=
-\operatornamewithlimits{Res}_{t=0}\frac{pr(St(\ov{i})(u)\cdot St(\ov{i})(v)\cdot q(t)\cdot\omega_t)}
{p\cdot t}=\\
&\eta_{p,\ov{i}}(U)\cdot
\operatornamewithlimits{Res}_{t=0}\frac{st(\ov{i})(v)\cdot q(t)\cdot dt}{t^{p\cdot\ddim(u)+1}}=
RHS
\end{split}
\end{equation*}
The second equality follows from the fact that
$\gamma_{st(\ov{i})}(x)=x\cdot (\iis\cdot t^{p-1})+\ldots+x^p\,\,$, and so
$$
st(\ov{i})^{t^{-(p-1)codim(v)}}(v)=\iis^{codim(v)}\cdot pr(v).
$$
\Qed
\end{proof}

\begin{rem}
\label{sym-ln}
Similar result can be obtained with the help of Landweber-Novikov (or Steenrod) operations, but then
the number $\eta_{p,\ov{i}}(U)$ will be substituted by $p\cdot\eta_{p,\ov{i}}(U)$, and such a difference
is crucial for $p$-torsion elements. This subtlety of Symmetric operations comes from the fact that
these operations encode $p$-divisibility of certain characteristic numbers, and in reality,
all $p$-primary divisibilities of characteristic numbers are controlled by
compositions of Symmetric operations related to $p$.
\end{rem}

As an illustration we have:

\begin{cor}
\label{nunv}
Let $u\in\laz_{p^r-1}$ be a $\nu_r$-element, and $v\in\Omega^m(X)$, where $m<\frac{p(p^r-1)}{p-1}$.
Then
$$
u\cdot v=0\,\,\Rightarrow\,\, pr(v)=0\in\op{CH}^*(X)\otimes\zz_ {(p)}.
$$
\end{cor}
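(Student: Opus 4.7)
The plan is to apply Proposition \ref{uv} to the vanishing product $u\cdot v=0$ for an appropriate choice of slice $q(t)=t^k$, and then invert the resulting coefficient $p$-locally using the invariant $\eta_{p,\ov{i}}$.

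First I would pick a representative $U$ of the class $u\in\laz_{p^r-1}$, and choose the coset representatives $\ov{i}=\{1,2,\ldots,p-1\}$, so that $\iis=(p-1)!\not\equiv 0\pmod{p}$ by Wilson's theorem, making $\iis$ invertible in $\zz_{(p)}$. Set $k=p\ddim(u)-(p-1)\op{codim}(v)=p(p^r-1)-(p-1)m$; the hypothesis $m<\frac{p(p^r-1)}{p-1}$ guarantees $k>0$, so we are in the range where the second part of Proposition \ref{uv} applies.

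Applying the second formula of Proposition \ref{uv} to the equality $u\cdot v=0$ yields
\[
0=\phi(\ov{i})^{t^k}(u\cdot v)=\eta_{p,\ov{i}}(U)\cdot\iis^{\op{codim}(v)}\cdot pr(v)
\]
in $\op{CH}^*(X)[\iis^{-1}]$. The remaining task is to show that the scalar $\eta_{p,\ov{i}}(U)\cdot\iis^{\op{codim}(v)}$ is a unit in $\zz_{(p)}$. The factor $\iis^{\op{codim}(v)}$ is invertible in $\zz_{(p)}$ by the choice of $\ov{i}$. For the other factor, Proposition \ref{il}(1) ensures that $\ov{\eta}_{p,\ov{i}}(U)\in\zz/p$ is well-defined (as $u\in I(p)$) and independent of $\ov{i}$; choosing a representative $U$ with $p\mid n_U$ (e.g.\ the degree $p$ hypersurface in $\pp^{p^r}$ used in the proof of \ref{il}(3)), Proposition \ref{il}(2) identifies it with $\pm\iis^{-\ddim(U)}\ov{\eta}_p(U)$, and Proposition \ref{il}(3) guarantees $\ov{\eta}_p(U)\ne 0\in\zz/p$. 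Hence $\ov{\eta}_{p,\ov{i}}(U)\ne 0\in\zz/p$, so $\eta_{p,\ov{i}}(U)$ is a unit in $\zz_{(p)}[\iis^{-1}]=\zz_{(p)}$.

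Combining these, after tensoring the displayed equality with $\zz_{(p)}$ the coefficient becomes a unit and we can cancel it, obtaining $pr(v)=0\in\op{CH}^*(X)\otimes\zz_{(p)}$. The only genuinely nontrivial input is the non-vanishing of $\ov{\eta}_{p,\ov{i}}(U)$ mod $p$ for a $\nu_r$-element, which has already been established in Proposition \ref{il}; the rest is a direct bookkeeping of degrees to ensure the positivity of $k$ and the $p$-invertibility of $\iis$.
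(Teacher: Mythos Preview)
Your approach is exactly the paper's: apply the second formula of Proposition~\ref{uv} with $k=p(p^r-1)-(p-1)m>0$, then cancel the scalar $\eta_{p,\ov{i}}(u)\cdot\iis^{\op{codim}(v)}$ after localizing at $p$. The paper's proof is the same two lines.

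There is one imprecision in how you invoke Proposition~\ref{il}. You write ``choosing a representative $U$ with $p\mid n_U$ (e.g.\ the degree $p$ hypersurface in $\pp^{p^r}$)'' in order to pass through part~(2). But the hypersurface $Q$ is \emph{a} $\nu_r$-element, not necessarily the given one: an arbitrary $\nu_r$-element $u\in\laz_{p^r-1}$ need not be the class of a single variety, let alone one with $p\mid n_U$, so you cannot simply swap $u$ for $[Q]$. What you actually need is that $\ov{\eta}_{p,\ov{i}}(u)\neq 0\in\zz/p$ for \emph{every} $\nu_r$-element $u$, and this is precisely what the proof of Proposition~\ref{il}(3) establishes directly: the characteristic-number argument there shows $\chi_{b_{p-1}^d}\equiv 0\pmod{p^2}$ on decomposables, hence $\ov{\eta}_{p,\ov{i}}$ is the same (up to a unit mod $p$) on all $\nu_r$-elements, and the hypersurface computation then gives the non-vanishing. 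The detour through part~(2) and the Rost invariant $\ov{\eta}_p$ is unnecessary; the paper simply cites Proposition~\ref{il} for ``$\eta_{p,\ov{i}}(u)$ is relatively prime to $p$''. With that adjustment your argument is complete.
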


\begin{proof}
It follows from Proposition \ref{uv} that $\eta_{p,\ov{i}}(u)\cdot pr(v)=0\in\op{CH}^*(X)[\iis^{-1}]$,
where $\eta_{p,\ov{i}}(u)\in\zz[\iis^{-1}]$ is relatively prime to $p$ by Proposition \ref{il}.
\Qed
\end{proof}

The above result can be used, for example, to compute the Algebraic Cobordism theory and Chow groups of
a Rost motive and a Pfister quadric - see \cite[the proof of Theorem 4.1]{so2}.
The same methods give the computation of
the Algebraic Cobordism of a {\it generalized Rost motive} (an analogue of the Rost motive for $p>2$).

The operation $\Phi=\Phi(\ov{i})$ is not additive, but is very close to such:

\begin{prop}
\label{addPhi}
$$
\Phi(u+v)=\Phi(u)+\Phi(v)+f_p(u,v),
$$
where $f_p(u,v)=\sum_{l=1}^{p-1}\frac{\binom{p}{l}}{p}u^lv^{p-l}$ is a polynomial of degree $p$ in $u,v$.
\end{prop}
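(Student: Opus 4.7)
The plan is to reduce the claim to the uniqueness property of $\Phi(\ov{i})$ by directly verifying the divisibility condition that defines it in Theorem \ref{SOp}. Set $R(u,v):=\Phi(\ov{i})(u+v)-\Phi(\ov{i})(u)-\Phi(\ov{i})(v)-f_p(u,v)$; the goal is to show $R(u,v)=0$. The core computation is that $[p]_{\Omega}\cdot R(u,v)\in\Omega^*[\iis^{-1}][[t]]t$. Applying the defining relation
$$\square^p(w)-St(\ov{i})(w)-[p]_{\Omega}\cdot\Phi(\ov{i})(w)\in\Omega^*[\iis^{-1}][[t]]t$$
of Theorem \ref{SOp} at $w=u+v$, $u$, $v$, and using that $St(\ov{i})$ is multiplicative (and in particular additive), the $St$-terms cancel in the alternating combination and there remains
$$[p]_{\Omega}\cdot(\Phi(u+v)-\Phi(u)-\Phi(v))\equiv (u+v)^p-u^p-v^p=p\cdot f_p(u,v)\pmod{\Omega^*[\iis^{-1}][[t]]t}.$$
Since $[p]_{\Omega}-p$ is divisible by $t$ in $\laz[[t]]$, one has $[p]_{\Omega}\cdot f_p(u,v)\equiv p\cdot f_p(u,v)$ modulo the same ideal. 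Subtracting yields the claim.

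Now $R(u,v)\in\Omega^*[\iis^{-1}][t^{-1}]$ is a polynomial in $t^{-1}$, and one needs to upgrade the divisibility just proven to the vanishing of $R(u,v)$. I would do this by a rigidity argument on cellular test spaces. Regard the assignment $(u,v)\mapsto R(u,v)$ as a natural transformation in the pair. Fixing an auxiliary smooth $Z$ and $v_0\in\Omega^n(Z)$, the map $u\mapsto R(\pi_Y^*u,\pi_Z^*v_0)\in\Omega^*(Y\times Z)[\iis^{-1}][t^{-1}]$ is a genuine operation in the sense of Theorem \ref{MAIN} (a natural transformation of contravariant functors in $Y$), hence is determined by its values on $Y=(\pp^{\infty})^{\times l}$; varying $Z$ analogously reduces to $Y\times Z=(\pp^{\infty})^{\times(l+m)}$ with $u,v$ pulled back from the two factors. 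On such a cellular space, $\Omega^*$ is a free $\laz$-module by the projective bundle axiom, and since $\iis\equiv(p-1)!\equiv -1\pmod p$ by Wilson's theorem, $p$ is not a zero-divisor in $\Omega^*[\iis^{-1}]$. Writing $R(u,v)=\sum_{n=0}^{N}r_{-n}t^{-n}$ and $[p]_{\Omega}=p+c_1t+c_2t^2+\cdots$, the vanishing of the $t^{-n}$-coefficient of $[p]_{\Omega}\cdot R(u,v)$ for $n=N,N-1,\ldots,0$ gives recursively $p\,r_{-N}=0$, then $p\,r_{-(N-1)}=0$, and so on, forcing all $r_{-n}=0$ and hence $R(u,v)=0$.

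The only mildly technical point is the passage from a two-variable to a single-variable transformation so that Theorem \ref{MAIN} may be applied directly; the parameterization by $(Z,v_0)$ described above makes this routine. An alternative is to apply Corollary \ref{div}(2) in the enlarged setting of operations $\Omega^n\row\Omega^*((-)\times Z)[\iis^{-1}][t^{-1}]$, noting that the injectivity of multiplication by $[p]_{\Omega}$ on $\Omega^*[\iis^{-1}][t^{-1}]$ over cellular $X$ is exactly the coefficient recursion above.
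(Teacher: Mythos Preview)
Your argument is correct and matches the paper's own proof: reduce to products of $\pp^{\infty}$'s via the external version of the statement and Theorem \ref{MAIN}, then use additivity of $St(\ov{i})$ together with injectivity of multiplication by $[p]_{\Omega}$ on cellular spaces. The paper compresses this into two sentences, whereas you have spelled out both the identity $[p]_{\Omega}\cdot R(u,v)\in\Omega^*[\iis^{-1}][[t]]t$ and the coefficient recursion explicitly.
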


\begin{proof}
For cellular spaces, where the division by $[p]_{\Omega}=\frac{p\cdot_{\Omega}t}{t}$ is well-defined, the statement
follows directly from the definition of $\Phi$ and the fact that $St$ is additive.
The general case follows from Theorem \ref{MAIN} considering the external
($u\in\Omega^*(X),v\in\Omega^*(Y)$) version of the statement.
\Qed
\end{proof}

The following statement describes the multiplicative properties of the Total Symmetric operation.

\begin{prop}
\label{multPhi}
Let $u,v\in\Omega^*(X)$. Let $St=St(\ov{i})$, $\Phi=\Phi(\ov{i})$, and
$[p]_{\Omega}=\frac{p\cdot_{\Omega}t}{t}$. Then
$$
\Phi(u\cdot v)=\big(\Phi(u)\cdot St(v)+St(u)\cdot\Phi(v)+\Phi(u)\Phi(v)\cdot [p]_{\Omega}\big)_{\leq 0}
$$
\end{prop}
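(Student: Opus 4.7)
The plan is to derive the identity by a direct computation with the defining relation of $\Phi$, first on the cellular test spaces $(\pp^{\infty})^{\times l}$, and then to propagate it to arbitrary $X$ via Theorem \ref{MAIN} applied in external form.

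First I would use multiplicativity: both $\square^p$ and $St=St(\ov i)$ are multiplicative operations, so
$\square^p(uv)=\square^p(u)\cdot\square^p(v)$ and $St(uv)=St(u)\cdot St(v)$.
By Theorem \ref{SOp}, for any $w$ one has a decomposition
$\square^p(w)=St(w)+[p]_{\Omega}\cdot\Phi(w)+r(w)$,
where the ``remainder'' $r(w)\in\Omega^*[\iis^{-1}][[t]]\,t$ sits in strictly positive powers of $t$. Plugging this into $\square^p(uv)=\square^p(u)\cdot\square^p(v)$ and expanding the product, every cross-term containing an $r(u)$ or $r(v)$ factor lies in the positive-$t$ part. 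Collecting terms and comparing with the decomposition of $\square^p(uv)$, one obtains modulo $\Omega^*[\iis^{-1}][[t]]\,t$ the equality
$$
[p]_{\Omega}\cdot\Phi(uv)\;\equiv\;[p]_{\Omega}\cdot\bigl(\Phi(u)St(v)+St(u)\Phi(v)+[p]_{\Omega}\Phi(u)\Phi(v)\bigr).
$$

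Next I would divide this relation by $[p]_{\Omega}$. This division makes sense on the cellular spaces $(\pp^{\infty})^{\times l}$: by the projective bundle axiom $\Omega^*((\pp^{\infty})^{\times l})$ is a free $\laz$-module, so $[p]_{\Omega}$ is not a zero-divisor there, and division is unique. Consequently, on such spaces,
$$
\Phi(uv)\;\equiv\;\Phi(u)St(v)+St(u)\Phi(v)+[p]_{\Omega}\Phi(u)\Phi(v)\pmod{\Omega^*[\iis^{-1}][[t]]\,t}.
$$
Since the left-hand side $\Phi(uv)$ already lives in $\Omega^*[\iis^{-1}][t^{-1}]$ (only non-positive powers of $t$), applying the truncation $(\,\cdot\,)_{\leq 0}$ to both sides yields exactly the claimed formula on all $(\pp^{\infty})^{\times l}$.

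Finally I would globalize via Theorem \ref{MAIN}. The statement is bilinear in $u,v$, so consider the two transformations
$$
(u,v)\longmapsto\Phi(u\cdot v)\qquad\text{and}\qquad(u,v)\longmapsto\bigl(\Phi(u)St(v)+St(u)\Phi(v)+[p]_{\Omega}\Phi(u)\Phi(v)\bigr)_{\leq 0}
$$
in their external form as a single operation in $u\boxtimes v\in\Omega^*(X\times Y)$, whose compatibility with the pull-backs of Theorem \ref{MAIN} is automatic because it is inherited from $\Phi$ and $St$ term by term (including the truncation $(\,\cdot\,)_{\leq 0}$, which commutes with pull-backs in the $X$-variable). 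Having checked agreement on $(\pp^{\infty})^{\times l}$ for all $l$, Theorem \ref{MAIN} forces these two operations to coincide on all smooth quasi-projective varieties; restricting via the diagonal $X\row X\times X$ recovers the stated identity.

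The main obstacle is conceptual rather than technical: one must ensure that the formal division by $[p]_{\Omega}$, which is only well-defined on cellular test spaces, produces a genuine universal operation identity. This is precisely what Theorem \ref{MAIN} (together with freeness of $\Omega^*((\pp^{\infty})^{\times l})$ over $\laz$) guarantees, in exactly the same way as in the construction of $\Phi(\ov i)$ itself and in the proof of Proposition \ref{SOoldnew}.
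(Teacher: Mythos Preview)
Your approach is essentially that of the paper: reduce to the cellular test spaces $(\pp^{\infty})^{\times l}$ via Theorem \ref{MAIN} in external form, use multiplicativity of $St$ together with the defining decomposition of $\Phi$, and divide uniquely by $[p]_\Omega$ on free $\laz$-modules. Two points, however, need tightening.

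First, the claim that ``every cross-term containing an $r(u)$ or $r(v)$ factor lies in the positive-$t$ part'' is not true term by term: e.g.\ $St(u)\cdot r(v)$ need not lie in $\Omega^*[\iis^{-1}][[t]]t$, because $St(u)$ generally carries negative powers of $t$ (through the action of $\ffi_{St}$ on classes in $\laz_{>0}$). What \emph{is} true is that the sum of all such cross-terms lies there. To see it, regroup via $St(w)+[p]_\Omega\Phi(w)=\square^p(w)-r(w)$; the cross-terms then collapse to
\[
-\square^p(u)\,r(v)-r(u)\,\square^p(v)-r(u)r(v),
\]
and since $\square^p(w)$ is $t$-constant while $r(w)\in[[t]]t$, each summand visibly lies in $[[t]]t$. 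The paper records exactly this as ``$+\,\delta'$ with $\delta'\in\Omega^*(X)[[t]]t$'' without spelling out the regrouping.

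Second, the identity is not bilinear in $(u,v)$: $\Phi$ is not additive (Proposition \ref{addPhi}), so neither side is. Hence ``a single operation in $u\boxtimes v$'' is not well-posed --- not every class on $X\times Y$ is an external product. The correct globalization (and the one the paper uses) is: pass to the external version with $u\in\Omega^*(X)$, $v\in\Omega^*(Y)$, fix $u$ and regard $v\mapsto(\text{LHS}-\text{RHS})$ as an operation $\Omega^*(-)\to\Omega^*(X\times -)[\iis^{-1}][t^{-1}]$, apply Theorem \ref{MAIN}, then swap the roles of $u$ and $v$. With these two fixes your argument coincides with the paper's.
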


\begin{proof}
Considering the external version of this statement ($u\in\Omega^*(X), v\in\Omega^*(Y)$),
and fixing $u$ (respectively $v$), we obtain from Theorem \ref{MAIN}
that it is sufficient to check our statement for $(\pp^{\infty})^{\times l}$, for all $l$.
We know that $St(u)=\delta(u)+\square^p(u)-\Phi(u)\cdot [p]_{\Omega}$,
for some $\delta(u)\in\Omega^*(X)[[t]]t$, and similar for $v$.
Using the multiplicative property of $St$ we get:
$$
\square^p(u\cdot v)-St(u\cdot v)=
\big(\Phi(u)\cdot St(v)+St(u)\cdot\Phi(v)+\Phi(u)\cdot\Phi(v)\cdot [p]_{\Omega}\big)
\cdot [p]_{\Omega}+\delta',
$$
where $\delta'\in\Omega^*(X)[[t]]t$.
Since now $X$ is cellular, the division by $[p]_{\Omega}$ is unique, and we get:
$$
\Phi(u\cdot v)=\big(\Phi(u)\cdot St(v)+St(u)\cdot\Phi(v)+\Phi(u)\Phi(v)\cdot [p]_{\Omega}\big)_{\leq 0}.
$$
\Qed
\end{proof}

The action of the Symmetric operations simplifies substantially in the case of
the {\it Graded Algebraic Cobordism}.
Recall, that $Gr\Omega^*=\oplus_{r\geq 0}\Omega^*_{(r)}$, where
$\Omega^*_{(r)}(X)=F^r\Omega^*(X)/F^{r+1}\Omega^*(X)$, and $F^r\Omega^*(X)$ consists on elements supported
on closed subschemes of codimension $\geq r$ (that is, vanishing on open complements to such
subschemes).
Any cohomological operation preserves the
support of the element, and so acts on the graded theory as well.
We have natural surjection of $\laz$-modules:
$\op{CH}^r(X)\otimes_{\zz}\laz\twoheadrightarrow\Omega^*_{(r)}(X)$, which we simply denote as
$z\otimes u\mapsto z\cdot u$.
The action of Steenrod and Symmetric operations in the graded case can be described as follows
(as above, we drop $(\ov{i})$ from the notations):

\begin{prop}
\label{grad}
Let $z\in\op{CH}^r(X)$, $u\in\laz_d$. Then:
\begin{itemize}
\item[$1)$ ] $St(z\cdot u)=z\cdot t^{r(p-1)}\cdot\iis^r\cdot St(u)$;
\item[$2)$ ] $\Phi(z\cdot u)=z\cdot t^{r(p-1)}\cdot\iis^r\cdot
\Phi(u)_{\leq -r(p-1)}$, where $r\neq 0$.
\end{itemize}
\end{prop}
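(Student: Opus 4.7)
My plan is to reduce both identities to cellular spaces via (the external form of) Theorem \ref{MAIN}: each identity reads as a coincidence of operations in the pair $(z,u)$ taking values in $\Omega^*_{(r)}(X)[\iis^{-1}][[t]][t^{-1}]$, so it suffices to verify them on products $(\pp^{\infty})^{\times l}$. There the surjection $\op{CH}^r(X)\otimes\laz\twoheadrightarrow\Omega^*_{(r)}(X)$ is an isomorphism and $\Omega^*$ is a free $\laz$-module, so the splitting principle lets me write $z=z_1\cdots z_r$ with $z_a=c_1(L_a)$.

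For $(1)$, I start from multiplicativity: $St(z\cdot u)=z\cdot\prod_a\prod_{j=1}^{p-1}(z_a+_{\Omega}i_j\cdot_{\Omega}t)\cdot St(u)$. Treating $t$ as a codimension-one class on the target of $St$, in the associated graded the formal group law degenerates to the additive one modulo the next filtration step, so $(z_a+_{\Omega}i_j\cdot_{\Omega}t)\equiv z_a+i_jt$. Reducing modulo $F^{r+1}$ on $X$ kills any additional factor of $z_a$ beyond those already produced out front, singling out the top-$t$ term $\prod_j i_jt=\iis\cdot t^{p-1}$ from each inner product, and giving $St(z)\equiv z\cdot\iis^r t^{r(p-1)}$; combining with the scalar $St(u)$ yields $(1)$.

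For $(2)$, I apply the defining relation $(\square^p-St-[p]_{\Omega}\Phi):\Omega^*\to\Omega^*[\iis^{-1}][[t]]t$ to $v=z\cdot u$ and restrict to the non-positive $t$-part:
\[
[p]_{\Omega}\cdot\Phi(z\cdot u)\;=\;\big(\square^p(z\cdot u)-St(z\cdot u)\big)_{\leq 0}.
\]
Since $r\neq 0$, the term $z^p u^p\in F^{rp}\subset F^{r+1}$ vanishes in the graded, and substituting $(1)$ rewrites the right-hand side as $-z\iis^r t^{r(p-1)}St(u)_{\leq -r(p-1)}$. I then compute the non-positive part of $[p]_{\Omega}\cdot z\iis^r t^{r(p-1)}\Phi(u)_{\leq -r(p-1)}$ by applying the defining relation to $u\in\laz$ itself: $[p]_{\Omega}\Phi(u)=u^p-St(u)+R$ with $R\in\laz[\iis^{-1}][[t]]t$. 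Multiplying by $t^{r(p-1)}$ (with $r(p-1)\geq 1$) sends both $u^p$ and $R$ into strictly positive $t$-powers, so $(t^{r(p-1)}[p]_{\Omega}\Phi(u))_{\leq 0}=-t^{r(p-1)}St(u)_{\leq -r(p-1)}$. Splitting $\Phi(u)=\Phi(u)_{\leq -r(p-1)}+\Phi(u)_{>-r(p-1),\leq 0}$, the second summand, after multiplication by $t^{r(p-1)}$, lies in strictly positive $t$-powers and so contributes nothing to the non-positive part after a further multiplication by $[p]_{\Omega}$. Hence the two expressions have equal non-positive parts after multiplication by $[p]_{\Omega}$; since $[p]_{\Omega}$ has leading coefficient $p$ (a non-zero-divisor) and $\Phi(z\cdot u)$ is a Laurent polynomial in $t^{-1}$, triangular inversion from the lowest $t^{-1}$-power recovers $\Phi(z\cdot u)=z\iis^r t^{r(p-1)}\Phi(u)_{\leq -r(p-1)}$.

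The main obstacle is the graded reduction $i_j\cdot_{\Omega}t\equiv i_jt$ underlying $(1)$: it requires interpreting $t$ as a codimension-one Chern class on the target, so that the Lazard-ring corrections to formal multiplication die in the associated graded. Once $(1)$ is in place, $(2)$ is essentially bookkeeping of $t$-degrees combined with the uniqueness of $[p]_{\Omega}$-division on the free $\laz$-module $\Omega^*((\pp^{\infty})^{\times l})$.
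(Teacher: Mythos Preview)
Your route to part $(1)$ is close in spirit to the paper's: both compute $St(z)$ in the associated graded and then use multiplicativity of $St$. You reduce to $(\pp^{\infty})^{\times l}$ and factor $z$ as a product of first Chern classes; the paper instead represents $z$ by a regular embedding and (implicitly) applies Riemann--Roch for $St$. The ``main obstacle'' you flag---the reduction $i_j\cdot_{\Omega}t\equiv i_j t$---is precisely what the paper asserts without comment when it writes $St(z)=z\cdot\iis^r t^{r(p-1)}$. Your proposed justification (treating $t$ as a codimension-one class on the target) does not work as stated: $t$ is a formal parameter in the coefficient ring, not a class supported in positive codimension on $X$, so the codimension filtration does not see it and the formal group law does not degenerate for it. On this point you and the paper stand in the same place.

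For part $(2)$ your approach is genuinely different. The paper invokes Proposition~\ref{multPhi} together with Proposition~\ref{emb}: since $z$ is represented by a regular embedding, $\Phi(z)=0$, and the multiplicative formula for $\Phi$ collapses to $\Phi(z\cdot u)=\big(St(z)\cdot\Phi(u)\big)_{\leq 0}$, after which part $(1)$ finishes in one line. You instead go back to the defining relation $\square^p-St-[p]_{\Omega}\Phi\in\Omega^*[\iis^{-1}][[t]]t$ and match non-positive $t$-parts directly, then invert $[p]_{\Omega}$ triangularly over the free $\laz$-module $\Omega^*((\pp^{\infty})^{\times l})$. This is correct (with the small fix that your displayed identity should read $\big([p]_{\Omega}\Phi(z\cdot u)\big)_{\leq 0}=\big(\square^p(z\cdot u)-St(z\cdot u)\big)_{\leq 0}$, which is what your subsequent reasoning in fact uses). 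The paper's route is shorter and more structural, leaning on the two earlier Propositions; yours is more self-contained but requires the careful $t$-degree bookkeeping you supply.
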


\begin{proof}
The case $r=0$ is trivial. For $r>0$,
since we are working modulo elements supported in codimension $\geq (r+1)$, we can assume that
$z$ is represented by a regular embedding, which gives that $St(z)=z\cdot\iis^r\cdot t^{r(p-1)}$.
Then part 1) follows from the multiplicativity of $St$.

The second part follows from Proposition \ref{multPhi} taking into account that $\Phi(z)=0$
(see Proposition \ref{emb}).
\Qed
\end{proof}

Using the action of Symmetric operations on $Gr\Omega^*$ we prove in \cite[Therem 4.3]{ACMLR}
that the Algebraic Cobordism $\Omega^*(X)$ as a module over $\laz$ has {\it relations}
in positive codimensions (and the same holds for the graded version).
This extends the result of M.Levine-F.Morel \cite[Theorem 4.4.7]{LM}
claiming that the {\it generators} of this $\laz$-module are in non-negative codimensions.

This, in particular, gives the computation of the Algebraic Cobordism of a curve:
$$
\Omega^*(C)=(\laz\otimes_{\zz}\op{Pic}(C))\oplus\laz\cdot 1_C.
$$

\end{document}